\newcommand{\begla}{\begin{equation}}
\newcommand{\beglab}[1]{\begin{equation}	\label{#1}}
\newcommand{\edla}{\end{equation}}
\newcommand{\eg}{{\it e.g.}\ }
\newcommand{\lhs}{{left-hand side\ }}
\newcommand{\rhs}{{right-hand side\ }}
\newcommand{\ens}{\enspace}
\newcommand{\ti}{\tilde}
\newcommand{\dd}{\mathrm{d}}
\newcommand{\defeq}{\coloneqq} 
\newcommand{\col}{\colon\thinspace}          
\newcommand{\cc}[1]{_{[#1]}}
\def\th{\theta}
\def\Z{{\mathbb Z}}\def\T{{\mathbb T}}\def\R{{\mathbb R}}
\def\cB{{\mathcal B}}
\def\cT{{\mathcal T}}
\def\cN{{\mathcal N}}
\def\cU{{\mathcal U}}
\def\bz{\bar z}
\def\ga{\gamma}
\newcommand{\ka}{\kappa}
\newcommand{\al}{\alpha}
\def\la{\lambda}
\def\th{\theta}
\def\<{\langle}
\def\>{\rangle}
\theoremstyle{plain}
\newtheorem{defin}{Definition}[section]
\newtheorem*{Cor*}{Corollary}
\newtheorem*{Prop*}{Proposition}
\newtheorem{Main}{Theorem}
\def\ti{\tilde}
\def\pa{\partial}
\def\th{\theta}
\newcommand{\ba}{\overline{A}}
\newcommand{\bM}{\overline{M}}
\newcommand{\cF}{\mathcal{F}}
\def\be{\begin{equation}}
\def\ba{{\begin{align}}}
\def\ea{{\end{align}}}
\def\bm{\begin{matrix}}
\def\em{\end{matrix}}
\def\0{{\mathbf 0}}
\newtheorem{thm}{Theorem}[section]
\newtheorem{lemma}[thm]{Lemma}
\newtheorem{prop}[thm]{Proposition}
\theoremstyle{remark}
\numberwithin{equation}{section}
\def\cB{\mathcal{B}}
\theoremstyle{definition}
\def\ssm{\smallsetminus}
\renewcommand{\setminus}{\ssm}
\newcommand{\dist}{\operatorname{dist}}
\newcommand{\eps}{{\epsilon}}
\newcommand{\om}{{{\omega}}}
\newcommand{\N}{{\mathbb N}}
\newcommand{\Q}{{\mathbb Q}}
\def\B0{{\bold{0}}}
\def\Empty{}
\newcommand\oplabel[1]{
  \def\OpArg{#1} \ifx \OpArg\Empty {} \else
  	\label{#1}
  \fi}
\newcommand{\comm}[1]{}
\newcommand{\comment}[1]{}
\newcommand{\Id}{{\rm Id}}
\newcommand{\ID}{{\rm Id}}
\renewcommand{\cB}{\mathcal B}
\renewcommand{\bz}{\bar{z}}
\newcommand{\bx}{\ov{x}}
\newcommand{\tx}{\til{x}}
\newcommand{\bPhi}{\bar{\Phi}}
\newcommand{\bom}{{\ov{\omega}}}
\newcommand{\bT}{\ov{T}}
\newcommand{\bp}{{\overline{p}}}
\newcommand{\bq}{{\overline{q}}}
\def\abs#1{\left\vert#1\right\vert}
\def\norm#1{\Vert#1\Vert}
\def\tdemi{\tfrac{1}{2}}
\def\ha{\widehat}
\def\til{\widetilde}
\def\ov{\overline}
\def\bbz{{\bf z}}
\def\bbT{{\bf T}}
\newcounter{paraga}[subsection]
\renewcommand{\theparaga}{{\bf\arabic{paraga}.}}
\newcommand{\paraga}{\medskip \addtocounter{paraga}{1}
\noindent{\theparaga\ } }
\newcommand{\beq}{\begin{equation}}
\newcommand{\eeq}{\end{equation}}
\newcommand{\bet}{\beta}
\newcommand{\exn}{^{(n)}}
\newcommand{\exnp}{^{(n+1)}}
\newcommand{\exnz}{^{(0)}}
\newcommand{\epsc}{\eps_{\operatorname{c}}}
\newcommand{\epsf}{\eps_{\operatorname{f}}}
\newcommand{\epsH}{\eps_{\operatorname{H}}}
\newcommand{\NN}{{(\N^N)^*}}
\author{B. Fayad, J.-P. Marco, D. Sauzin}
\title{Attracted by an elliptic fixed point}
\begin{document}

\maketitle{}

\begin{abstract} We give examples of symplectic diffeomorphisms
  of~$\R^{6}$ for which the origin is a non-resonant elliptic fixed
  point which attracts an orbit.
\end{abstract}
\section{Introduction}
Consider a symplectic diffeomorphism 
of~$\R^{2n}$ 
(for the canonical symplectic form)
with a fixed point at the origin.
We say that the fixed point is elliptic of frequency vector
$\om=(\om_1,\ldots,\om_n)\in\R^n$ if the linear part of the
diffeomorphism at the fixed point is conjugate to the rotation map
$$
S_\om : (\R^2)^n\righttoleftarrow,\qquad
S_{\om}(s_1,\ldots,s_n) \defeq (R_{\om_1}(s_1),\ldots,R_{\om_n}(s_n)).
$$ 
Here, for $\bet \in \R$, $R_\bet$ stands for the rigid rotation around
the origin in~$\R^2$ with rotation number $\bet$.
We say that the frequency vector $\om$ is non-resonant
if for any $k \in \Z^n -\{0\}$ we have $(k,\om) \notin \Z$,
%
%
where $(\cdot,\cdot)$ stands for the Euclidean scalar product.


It is easy to construct symplectic diffeomorphisms with orbits
attracted by a resonant elliptic fixed point.  
For instance,
the time-$1$ map of the flow generated by the Hamiltonian function
$H(x,y)=y(x^2+y^2)$ in $\R^2$ has a saddle-node type fixed point, at
which the linear part is zero, which attracts all the points on the negative
part of the $x$-axis.
The situation is much subtler in the non-resonant case. 

The Anosov-Katok construction \cite{AK} of ergodic diffeomorphisms by
successive conjugations of periodic rotations of the disc gives
examples of smooth area preserving diffeomorphisms with non-resonant elliptic fixed points at the origin that are Lyapunov
unstable. The method also yields examples of ergodic symplectomorphisms with non-resonant elliptic fixed points in higher dimensions.

These  constructions obtained by the successive conjugation technique have totally
degenerate fixed points since they are $C^\infty$-tangent to a 
rotation $S_\om$ at the origin.
 
In the non-degenerate case, R. Douady gave examples in \cite{douady}
of Lyapunov unstable elliptic points for smooth symplectic
diffeomorphisms for any $n\geq 2$, for which the Birkhoff normal form
has non-degenerate Hessian at the fixed point but is otherwise
arbitrary.
Prior examples for $n=2$ were obtained in \cite{DLC} (note that by KAM theory, a non-resonant elliptic fixed point of a smooth area preserving surface diffeomorphism that has a non zero Birkhoff normal form is accumulated by invariant quasi-periodic smooth curves (see \cite{moser}). Hence in the one dimensional case, non-degeneracy implies that the point is Lyapunov stable).

In both of the above examples, no orbit distinct from the origin converges to it. Indeed, in the Anosov-Katok examples, a sequence of iterates of the diffeomorphism converges uniformly to Identity, hence every orbit is recurrent and no orbit can converge to the origin, besides the origin itself. As for the non-degenerate examples of Douady and Le Calvez, their 
 Lyapunov instability is deduced
from the existence of a sequence of points that converge to the
fixed point, and whose orbits travel, along a simple resonance, away
from the fixed point. By construction,  these examples do not have a
single orbit besides the origin that converges to it. 

Our goal in this paper is to construct an example of a Lyapunov
unstable fixed point for a Gevrey diffeomorphism with an orbit
converging to it.
Recall that, given a real $\al\ge1$, Gevrey-$\al$ regularity is
defined by the requirement that the partial derivatives exist at all
(multi)orders~$\ell$ and are bounded by
$C M^{\abs\ell} \abs{\ell}!^\al$ for some~$C$ and~$M$
(when $\al=1$, this simply means analyticity);
upon fixing a real $L>0$ which essentially stands for the inverse of
the previous~$M$, 
one can define a Banach algebra
$\big(G^{\al,L}(\R^{2n}), \norm{\,.\,}_{\al,L}\big)$.
%
%

We set $X\defeq(\R^2)^3$ and denote by $\cU^{\al,L}$ the set of
all Gevrey-$(\al,L)$ symplectic diffeomorphisms of~$X$ which
fix the origin and are $C^\infty$-tangent to~$\Id$ at the origin.
We refer to Appendix~\ref{App:Gevrey} for the precise definition of
$\cU^{\al,L}$ and of a distance 
$\dist(\Phi,\Psi) = \norm{\Phi-\Psi}_{\al,L}$
which makes it a complete metric space.
We will prove the following.


\begin{Main} \label{theo.main} 
Fix $\al>1$ and $L>0$. For each $\ga>0$, there exist 
a non-resonant
vector $\om \in \R^3$, 
a point $z \in  X$,
and a diffeomorphism $\Psi \in \cU^{\al,L}$ 
such that $\norm{\Psi-\Id}_{\al,L}\le \ga$ 
and $T = \Psi \circ S_\om$ satisfies $T^n(z) \underset{n\to +\infty}{ \longrightarrow} 0$. 
\end{Main}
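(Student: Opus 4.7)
The plan is a sequential-coupling construction that builds $\Psi$ so that $T = \Psi \circ S_\omega$ acts on a specific transport channel as an infinite chain of localized displacements, each one carrying an orbit from an annular region at scale $r_k$ to another at scale $r_{k+1} < r_k$, with $r_k \downarrow 0$. In the spirit of Anosov-Katok and Douady but aimed at actual convergence rather than mere Lyapunov instability, the diffusion is powered by a family of near-resonances of the linear rotation $S_\omega$ activated at the successive scales.

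First, I would fix a sequence of scales $r_k \downarrow 0$ and simultaneously construct, by a Liouville-type diophantine argument interlaced with the rest of the proof, a non-resonant vector $\omega \in \R^3$ together with integer vectors $k^{(j)} \in \Z^3 \setminus \{0\}$ such that $(k^{(j)},\omega) \bmod 1$ is exceedingly small on a scale tuned to $r_j$. This makes $S_\omega$ effectively resonant at scale $r_j$ along the hyperplane orthogonal to $k^{(j)}$, providing the channel for transport at that scale. At each scale $r_k$ I would then design a compactly supported Gevrey Hamiltonian $h_k$, supported in an annular set of size $r_k$, whose time-$1$ flow $\Phi_{h_k}$, composed with $S_\omega$, produces an effective push along the resonance $k^{(k)}$ that maps a prescribed entry rectangle $U_k$ into a subregion reachable at scale $r_{k+1}$.

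Next, set $h \defeq \sum_k h_k$ (the supports are disjoint annuli, so the sum is unambiguous) and $\Psi \defeq \Phi_h$. Because $\alpha > 1$, Gevrey bump functions at scale $r_k$ are available with quantitative control on their $\norm{\cdot}_{\alpha,L}$-norms; choosing the amplitudes of the $h_k$ small enough (and scales $r_k$ decreasing fast enough) then yields $\norm{h}_{\alpha,L} \le \gamma'$, and consequently $\Psi \in \cU^{\alpha,L}$ with $\norm{\Psi - \Id}_{\alpha,L} \le \gamma$ after adjusting constants. The $C^\infty$-tangency of $\Psi$ to $\Id$ at the origin follows from the fact that the supports of all $h_k$ avoid the origin. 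The attracted orbit is then obtained by nested preimages: for each $k$ there is a subrectangle $U_k' \subset U_k$ such that $T^{N_k}(U_k') \subset U_{k+1}$ for a suitable finite iterate count $N_k$; the intersection of the decreasing nested compact sets contains a point $z$ whose forward $T$-orbit visits $U_1, U_2, \ldots$ in turn and therefore converges to the origin.

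The main obstacle will be the coordinated choice of the sequences $r_k$, $k^{(j)}$, and the amplitudes of $h_k$: the Gevrey norms of bumps supported at scale $r_k$ grow rapidly as $r_k \to 0$, which forces the displacement produced by each $h_k$ to shrink, which in turn demands correspondingly precise Liouville approximation of $\omega$ by the resonances $k^{(j)}$, while the nesting $U_{k+1} \subset T^{N_k}(U_k')$ imposes quantitative geometric constraints on the entry rectangles at every stage. Fitting all three requirements into a single coherent inductive scheme, rather than proving any isolated estimate, is the crux of the construction.
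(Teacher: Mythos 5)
Your outline shares the paper's general architecture (localized Gevrey Hamiltonian kicks whose amplitudes are forced to be $\exp(-c\,r_k^{-1/(\al-1)})$-small by the Gevrey norm of a bump at scale $r_k$, an inductive choice of $\om$ interlaced with the construction, and a nested-compact-sets/limit argument for the attracted point), but the core dynamical mechanism you propose is different and is where the gap lies. You want to transport the orbit inward by drift along near-resonances $(k^{(j)},\om)\approx 0 \pmod 1$ of a \emph{fixed} Liouville vector $\om$, in the spirit of Douady's instability examples. The step ``$\Phi_{h_k}\circ S_\om$ produces an effective push along the resonance mapping $U_k$ into a region at scale $r_{k+1}$'' is asserted, not established, and it is precisely the hard point. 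Since $\|h_k\|_{C^1}\lesssim \exp(-c\,r_k^{-1/(\al-1)})$, the drift per iterate is exponentially small and the crossing requires $N_k\gtrsim r_k\exp(c\,r_k^{-1/(\al-1)})$ iterates. Over such a stretch you must (i) keep the resonant phase $k^{(k)}\!\cdot\theta$ coherent, which forces the quality of the approximation $(k^{(k)},\om)\bmod 1$ to be chosen \emph{after} $N_k$ is known --- so $\om$ cannot really be fixed in advance, and your ``interlaced'' construction has a circularity to resolve --- and (ii) show that the non-resonant harmonics of $h_k$ average out over exponentially many iterates of a rotation that is Liouville, hence has catastrophic small denominators; no averaging/normal-form argument is supplied. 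Moreover, drift along a single integer direction $k^{(k)}$ couples the three actions $|s_i|^2/2$; decreasing all of them toward $0$ while staying inside the prescribed annuli requires choices and verifications that are absent.

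The paper's mechanism avoids all of this. Its localized Hamiltonians are of the form $f_{z,\nu}(s_i)\,g_R(s_j)$ with $g_R(x,y)=xy\,\eta_R(x)\eta_R(y)$: on the $x$-axis of the $j$-th factor the flow of $xy$ is $\dot x=-x$, $\dot y=0$, so the time-one map is an \emph{exact contraction} $x\mapsto \ka x$ of that coordinate, switched on only when $s_i$ lies in a tiny ball (and, crucially, $g_R$ vanishes there, so the conditioning coordinate $s_i$ is frozen). The synchronization is achieved not by near-resonances but by taking $\om$ \emph{exactly rational} at each finite stage, with divisibility conditions $q_3\,\vert\, q_1\,\vert\, q_2$, so that $S_\om$ is periodic, the orbit returns exactly to the coordinate axes, and the contraction is applied coherently every period with no error accumulation and no averaging; non-resonance of the limit $\om^\infty$ is then recovered from $\eta^{(n)}$-density of the finite-stage orbits. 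To make your route rigorous you would essentially have to prove a long-time stability-of-the-resonant-normal-form statement at each scale, which is a substantially harder (and unaddressed) analytic problem than anything in the paper; as it stands the proposal does not constitute a proof.
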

%

We do not know how to produce real analytic examples. Recall that 
not even one example of a
real analytic symplectomorphism with a Lyapunov unstable non-resonant elliptic 
fixed point is known. 

%
For other instances of the use of Gevrey regularity with symplectic or Hamiltonian
dynamical systems, see \eg
\cite{Popov04},
\cite{hms}, 
\cite{MSetds},
\cite{MitevPopov},
\cite{mcwdGev},
\cite{BF}.
%

Our construction easily extends to the case where $X=(\R^2)^n$ with $n\geq 3$, however we do not know how to adapt the method to the case $n=2$. 
As for the case $n=1$, there may well be no regular examples at all. Indeed if the rotation frequency at the fixed point is Diophantine, then a theorem by Herman (see \cite{FKherman}) implies that the fixed point is surrounded by invariant quasi-periodic circles, and thus is Lyapunov stable. The same conclusion holds by Moser's KAM theorem if the Birkhoff normal form at the origin is not degenerate \cite{moser}.  In the remaining case of a degenerate Birkhoff normal form with a Liouville frequency, there is evidence from \cite{AFLXZ} that the diffeomorphism should then be rigid in the neighborhood of the origin, that is, there exists a sequence of integers along which its iterates  converge to Identity near the origin, which clearly precludes the convergence to the origin of an orbit.

Similar problems can be addressed where one searches for Hamiltonian
diffeomorphisms (or vector fields) with orbits whose $\al$-limit or
$\om$-limit have large Hausdorff dimension (or positive Lebesgue
measure) and in particular contain families of non-resonant invariant
Lagrangian tori instead of a single non-resonant fixed point.  
A specific example for Hamiltonian flows on $(\T\times \R)^3$ is
displayed in \cite{KS}, while a more generic one has been announced in
\cite{KG}.
In these examples, the setting is perturbative and the Hamiltonian
flow is non-degenerate in the neighborhood of the tori.  The methods
involved there are strongly related to Arnold diffusion and are  
completely different from ours.  
%

\section{Preliminaries and outline of the strategy}


From now on we fix $\al>1$ and $L>0$. 
We also pick an auxiliary $L_1>L$.
For $z \in \R^2$ and $\nu>0$, we denote by $B(z,\nu)$ the 
closed 
ball relative to $\norm{\,.\,}_\infty$ centred at~$z$ with
radius~$\nu$.
Since $\al>1$, we have 

\begin{lemma}  \label{lemma.f}  
%
%
There is a real $c=c(\al,L_1)>0$ such that,
for any $z \in \R^2$ and $\nu>0$, there exists a function $f_{z,\nu} \in G^{\al,L_1}(\R^2)$ which satisfies 
\begin{itemize}
\item $0\le f_{z,\nu}\le 1$, 
\item $f_{z,\nu}\equiv 1$ on $B(z,\nu/2)$,
\item $f_{z,\nu}\equiv 0$ on $B(z,\nu)^c$,
\item $\norm{f_{z,\nu}}_{\al,L_1} \leq \exp(c\,{\nu^{-\frac{1}{\al-1}}})$.
\end{itemize}
\end{lemma}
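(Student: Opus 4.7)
My plan is to construct $f_{z,\nu}$ as a tensor product of two one-dimensional bumps. Since a ball for $\norm{\cdot}_\infty$ is a Cartesian product of intervals, $B(z,\nu)=[z_1-\nu,z_1+\nu]\times[z_2-\nu,z_2+\nu]$, it suffices to produce, for each $\zeta\in\R$ and each $\nu>0$, a one-dimensional Gevrey-$(\al,L_1)$ function $g_{\zeta,\nu}\col\R\to[0,1]$ equal to $1$ on $[\zeta-\nu/2,\zeta+\nu/2]$ and vanishing outside $[\zeta-\nu,\zeta+\nu]$; then $f_{z,\nu}(x_1,x_2)\defeq g_{z_1,\nu}(x_1)\,g_{z_2,\nu}(x_2)$ will do. The Banach algebra structure of $G^{\al,L_1}$ (established in the appendix) ensures that such a tensor product is Gevrey with norm bounded by the product of the one-dimensional norms, so only a multiplicative constant is lost in passing from $\R$ to $\R^2$.

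For the one-dimensional construction I would start from the classical Gevrey-$\al$ corner function $\varrho(t)\defeq\exp(-t^{-1/(\al-1)})$ for $t>0$, extended by zero for $t\le 0$. Cauchy-type estimates (or a direct Fa\`a di Bruno computation) yield $|\varrho^{(\ell)}(t)|\le C_0 M_0^\ell(\ell!)^\al$ uniformly. Build the Gevrey ramp $\Phi(t)\defeq Z^{-1}\int_{-\infty}^{t}\varrho(s)\varrho(1-s)\,\dd s$, with $Z$ the total mass; then $\Phi\equiv 0$ on $\R_-$ and $\Phi\equiv 1$ on $[1,+\infty)$. Set
$$
g_{\zeta,\nu}(x)\defeq\Phi\!\left(\tfrac{2(x-\zeta+\nu)}{\nu}\right)\Phi\!\left(\tfrac{2(\zeta+\nu-x)}{\nu}\right),
$$
which has exactly the required profile.

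The heart of the matter is the norm estimate. The rescaling by $\nu/2$ inflates the order-$\ell$ derivatives of $\Phi$ by a factor $(2/\nu)^\ell$, so derivatives of $g_{\zeta,\nu}$ obey bounds of the form $C(M/\nu)^\ell(\ell!)^\al$. Feeding this into the Gevrey-$(\al,L_1)$ weight produces, at each order $\ell$, a contribution of essentially the form $(L_1M/\nu)^\ell$ times $(\ell!)^\al/(\ell!)^\al$; evaluating the resulting sum or supremum by the elementary saddle point $\ell_\star\sim(L_1M/\nu)^{1/(\al-1)}$ gives exactly the claimed bound $\exp(c\,\nu^{-1/(\al-1)})$.

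The main obstacle is therefore not the shape of the construction, which is standard, but the careful bookkeeping between the derivative bounds on~$\varrho$, the rescaling factor~$\nu$, and the specific norm conventions of the appendix: the exponent $1/(\al-1)$ is the tight Gevrey ``uncertainty-principle'' exponent for bump widths and cannot be improved, so no slack is available, and the margin $L_1>L$ must be used in precisely the right way to absorb both the cost of the rescaling and the multiplicative loss from the tensor-product step.
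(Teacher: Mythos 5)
Your overall architecture (one-dimensional bump, tensor product over the two factors of $\R^2$, with the tensor step costing only a product of one-dimensional norms) is unobjectionable, and since the paper's own proof is a one-line citation of Lemma~3.3 of \cite{MSetds}, a self-contained construction would be a legitimate alternative. But the norm estimate, which you correctly identify as the heart of the matter, contains a genuine gap that invalidates the construction as written. A fixed Gevrey-$\al$ ramp $\Phi$ built from the corner function $\varrho(t)=\exp(-t^{-1/(\al-1)})$ has derivatives that genuinely grow like $\norm{\Phi^{(\ell)}}_{C^0}\ge c^{\ell}\,\ell!^{\al}$ (look at the leading Fa\`a di Bruno term at the near-critical points $t\sim \ell^{-(\al-1)}$: it is of order $\ell^{\al\ell}e^{-\ell}\sim c^\ell\ell!^\al$), matching the upper bound $C^{\ell}\ell!^{\al}$ up to the geometric factor. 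After rescaling by $\nu/2$, the $\ell$-th term of the series defining $\norm{g_{\zeta,\nu}}_{\al,L_1}$ is therefore of order $\frac{L_1^{\ell\al}}{\ell!^{\al}}\cdot(c/\nu)^{\ell}\ell!^{\al}=(cL_1^{\al}/\nu)^{\ell}$: the factorials cancel \emph{exactly}, leaving a geometric series of ratio $>1$ as soon as $\nu<cL_1^{\al}$. There is no saddle point --- the terms increase monotonically and the norm is $+\infty$. The saddle point $\ell_\star\sim x^{1/(\al-1)}$ with value $e^{cx^{1/(\al-1)}}$ that you invoke belongs to sums of the form $\sum_\ell x^{\ell}/\ell!^{\al-1}$, i.e.\ it would require derivative bounds $C(M/\nu)^{\ell}\,\ell!$ with a \emph{single} factorial; no compactly supported function satisfies such bounds (it would be analytic). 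So the issue is not bookkeeping: the rescaled fixed profile simply does not lie in $G^{\al,L_1}(\R)$ for small $\nu$.

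The defect is intrinsic to any ``rescale a fixed profile'' scheme: the width-$\nu$ bump must have a derivative sequence that changes regime at the threshold $\ell\sim\nu^{-1/(\al-1)}$, growing only geometrically (no factorial) below it and like $L_1^{-\al\ell}\ell!^{\al}$ above it. One workable construction in the spirit of \cite{MSetds} is Bang's infinite convolution $f=\mathbf{1}_{I_0}*a_1^{-1}\mathbf{1}_{I_1}*a_2^{-1}\mathbf{1}_{I_2}*\cdots$ of indicators of intervals $I_j$ of lengths $a_j$, which is supported in an interval of length $\sum_j a_j$, equals $1$ on an interval of length $a_0-\sum_{j\ge1}a_j$, and satisfies $\norm{f^{(\ell)}}_{C^0}\le 2^{\ell}/(a_1\cdots a_\ell)$. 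Taking $a_j\asymp\nu^{\al/(\al-1)}$ for $1\le j\le J\defeq\lceil\nu^{-1/(\al-1)}\rceil$ and $a_j$ a suitably large multiple of $L_1^{\al}j^{-\al}$ for $j>J$ gives total width $O(\nu)$, and splitting the norm series at $\ell=J$ yields precisely $\exp(c\,\nu^{-1/(\al-1)})$ (the saddle-point computation you had in mind occurs in the range $\ell\le J$, applied to $\sum_\ell (C\nu^{-\al/(\al-1)})^{\ell}/\ell!^{\al}$). A minor further remark: the margin $L_1>L$ plays no role in this lemma --- the bump is built directly in $G^{\al,L_1}$; that margin is consumed later, in the composition and flow estimates of the appendix.
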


\begin{proof}
Use Lemma~3.3 of \cite{MSetds}.
\end{proof}



We now fix an arbitrary real $R>0$ and pick an auxiliary function
$\eta_R\in G^{\al,L_1}(\R)$ which is identically~$1$ on the interval
$[-2R,2R]$, identically~$0$ outside $[-3R,3R]$, and everywhere non-negative.
We then define $g_R \col \R^2 \to \R$ by the formula
%
%
\begla
g_R(x,y) \defeq x y \,\eta_R(x)\, \eta_R(y).
\edla
%

The following diffeomorphisms will be of constant use in this paper:

\begin{defin}   \label{definPhiznu}
For $(i,j)\in \{1,2,3\}$, $z \in \R^2$ and $\nu>0$, we
  denote by~$\Phi_{i,j,z,\nu}$ 
  the time-one map of the Hamiltonian flow generated by the function
%
%
$\exp(-c\,\nu^{-\frac{2}{\al-1}}) f_{z,\nu} \otimes_{i,j} g_R$, 
%
where 
$f_{z,\nu} \otimes_{i,j} g_R \col X \to \R$ stands for the function 
$$
s=(s_1,s_2,s_3) \mapsto f_{z,\nu} (s_i) g_R(s_j).
$$
\end{defin}

In the above definition, our convention for the Hamiltonian vector
field generated by a function~$H$ is
$X_H = \sum (-\frac{\pa H}{\pa y_i}\frac{\pa\;\;}{\pa x_i}
+ \frac{\pa H}{\pa x_i}\frac{\pa\;\;}{\pa y_i})$.
Note that the Hamiltonian $\exp(-c\,\nu^{-\frac{2}{\al-1}}) f_{z,\nu} \otimes_{i,j} g_R$ has compact support,
hence it generates a complete vector field and
Definition~\ref{definPhiznu} makes sense.
Actually, any $H\in G^{\al,L_1}(X)$ has bounded partial
derivatives, hence~$X_H$ is always complete;
the flow of~$X_H$ is made of Gevrey maps for which
estimates are given in Appendix~\ref{secappGevFlEsti}. In the case of
$\Phi_{i,j,z,\nu}$, 
for~$\nu$ small enough 
we have
\beglab{ineqPhiijznu}
\Phi_{i,j,z,\nu}\in\cU^{\al,L} 
\quad\text{and}\quad
%
\norm{\Phi_{i,j,z,\nu}-\Id}_{\al,L} \le K \exp(-c\,\nu^{-\frac{1}{\al-1}}),
\edla
with $K \defeq C \norm{g_R}_{\al,L_1}$, where~$C$ is independent from
$i,j,z,\nu$ and stems from~\eqref{ineqPhiuId}.
Here are the properties which make the~$\Phi_{i,j,z,\nu}$'s precious. To alleviate the notations, we  state them 
for $\Phi_{2,1,z,\nu}$ but similar properties hold for each diffeomorphism~$\Phi_{i,j,z,\nu}$.

\begin{lemma} \label{lemma.phi} 
Let $z \in \R^2$ and $\nu>0$. Then~$\Phi_{2,1,z,\nu}$ satisfies:
\medskip

\begin{itemize} 
\item[(a)] For every $(s_1,s_2,s_3) \in X$ such that $s_2 \in B(z,\nu)^c$,  
$$\Phi_{2,1,z,\nu}(s_1,s_2,s_3)=(s_1,s_2,s_3).$$
\\[-3ex]

\item[(b)] For every $x_1 \in \R$, $s_2\in\R^2$ and $s_3\in\R^2$,
 $$\Phi_{2,1,z,\nu}((x_1,0),s_2,s_3)=((\tx_1,0),s_2,s_3)
\ens\text{with} \ens 
\abs{\tx_1} \le \abs{x_1}.$$
\smallskip

\item[(c)]  For every $x_1 \in [-2R,2R]$, $s_2 \in B(z,\nu/2)$ and
  $s_3 \in \R^2$,
$$\Phi_{2,1,z,\nu}((x_1,0),s_2,s_3)=((\tx_1,0),s_2,s_3)
\ens\text{with} \ens 
\abs{\tx_1} \le \ka\abs{x_1},$$ 
where $\ka \defeq 1- \tdemi\exp(-c\,\nu^{-\frac{2}{\al-1}})$.
\end{itemize} 
\smallskip

\end{lemma}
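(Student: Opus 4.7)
The plan is to compute Hamilton's equations for $H = \eps\,(f_{z,\nu}\otimes_{2,1} g_R)$ with $\eps \defeq \exp(-c\,\nu^{-\frac{2}{\al-1}})$, and exploit the tensor structure. Writing $s_i = (x_i, y_i)$, one sees immediately that $\dot s_3 \equiv 0$ (so $s_3$ is preserved), while the equations for $(s_1, s_2)$ couple only through the scalar factors $f_{z,\nu}(s_2)$ and $g_R(s_1)$. Statement (a) is then immediate: if $s_2 \in B(z,\nu)^c$, then on this open set $f_{z,\nu}$ vanishes identically together with its gradient, hence $X_H$ vanishes at that point and the trajectory is constant.

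For (b), the crucial observation is that $g_R(x,y) = xy\,\eta_R(x)\,\eta_R(y)$ is divisible by $y$, so both $g_R(x, 0) = 0$ and $\partial_x g_R(x, 0) = 0$. The second identity yields $\dot y_1 = 0$ whenever $y_1 = 0$, so the subset defined by $y_1 = 0$ is invariant under the flow; the first identity then forces $g_R(s_1) \equiv 0$ along this invariant subset, so the equations $\dot x_2 = -\eps\, g_R(s_1)\,\partial_{y_2}f_{z,\nu}$ and $\dot y_2 = \eps\, g_R(s_1)\,\partial_{x_2}f_{z,\nu}$ vanish too, meaning $s_2$ stays constant. The surviving equation reads $\dot{\tx}_1 = -\eps\,f_{z,\nu}(s_2)\,\eta_R(\tx_1)\,\tx_1$ (using $\eta_R(0) = 1$), whose non-positive coefficient makes $|\tx_1(t)|$ non-increasing; evaluating at $t=1$ gives the bound.

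For (c), the additional hypotheses allow us to compute these coefficients exactly. The inclusion $s_2 \in B(z,\nu/2)$ gives $f_{z,\nu}(s_2) = 1$, and $x_1 \in [-2R, 2R]$ combined with the monotonicity from (b) guarantees $\tx_1(\tau) \in [-2R, 2R]$, hence $\eta_R(\tx_1(\tau)) = 1$, throughout $\tau \in [0,1]$. The ODE reduces to $\dot{\tx}_1 = -\eps\,\tx_1$, giving $\tx_1(1) = x_1 e^{-\eps}$. Since $c, \nu > 0$ forces $\eps \in (0,1)$ automatically, the elementary inequality $e^{-\eps} \le 1 - \eps + \eps^2/2 \le 1 - \eps/2$ yields $|\tx_1(1)| \le \ka\,|x_1|$. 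No real obstacle arises; the only point requiring attention is to notice that the specific algebraic form of $g_R$ simultaneously renders the subset $y_1 = 0$ invariant and annihilates $g_R$ on it, and this is what gives exact control of the orbit on this subset rather than only estimates.
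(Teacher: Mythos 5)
Your proof is correct and follows essentially the same route as the paper, which invokes the splitting $\Phi_{f \otimes_{2,1} g}(s_1,s_2)=(\Phi^{f(s_2)g}(s_1),\Phi^{g(s_1)f}(s_2))$ coming from the tensor structure of the Hamiltonian; your direct computation with Hamilton's equations (invariance of $\{y_1=0\}$, constancy of $s_2$ there, and the resulting scalar ODE for $x_1$) is just an explicit verification of the same facts, with the elementary bound $e^{-\eps}\le 1-\eps/2$ supplying the constant $\ka$.
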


\begin{proof}
%
The dynamics of the flow generated by $f_{z,\nu}\otimes_{2,1} g_R$ can easily be understood from those of  the flows generated by~$f_{z,\nu}$ alone on the second factor~$\R^2$ and
by~$g_R$ alone on the first factor~$\R^2$. Indeed 
$$\Phi_{f \otimes_{2,1} g}(z_1,z_2)=(\Phi^{f(z_2)g}(z_1),\Phi^{g(z_1)f}(z_2))$$
where $\Phi^h$ denotes the time one map associated to the Hamiltonian $h$. 

The properties (a)-(b)-(c) immediately follow from the latter expression. 
\end{proof}

%
%
%

%

From now on, we denote simply by $\abs{\,.\,}$ the
$\norm{\,.\,}_\infty$ norm in~$\R^2$ or in $X=\R^6$, and by $B(s,\rho)$
the corresponding closed ball centred at~$s$ with radius~$\rho$ (the
context will tell whether it is in~$\R^2$ or~$\R^6$).

Here is a brief outline of the strategy for proving
Theorem~\ref{theo.main} and obtaining, inductively, the required~$\Psi$, $z$ and~$\om$:
\begin{itemize}
\item
The diffeomorphism~$\Psi$ in   Theorem~\ref{theo.main} will be obtained
as an infinite product (for composition) of diffeomorphisms of the form~$\Phi_{i,j,z,\nu}$,
with smaller and smaller values of~$\nu$ so as to derive convergence in~$\cU^{\al,L}$ from~\eqref{ineqPhiijznu}.
\item
  On the other hand, $R$ will be kept fixed and the initial
  condition~$z$ will be obtained as the limit of a sequence contained
  in the ball $B(0,R) \subset X$.
\item
As for the non-resonant frequency vector~$\om$ in
Theorem~\ref{theo.main}, it will be obtained as a limit of vectors
with rational coordinates with larger and larger denominators, so as
to make possible a kind of ``orbit synchronization'' at each step of the construction.
\end{itemize}


\section{The attraction mechanism} 

Starting from a point $z=((x_1,0),(x_2,0),(x_3,0))$, the mechanism of attraction of the point to the origin 
is an alternation between bringing closer to zero the $x_1$,$x_2$ or $x_3$ coordinates when all the coordinates of the point  
come back to the horizontal axes. The main ingredient is the following lemma,
where 
we use shortcut notation $\Phi_{i,j,x,\nu}$ for $\Phi_{i,j,(x,0),\nu}$
and, for two integers $Q_1,Q_2$, the notation
$Q_1\vert Q_2$ stands for ``$Q_1$ divides $Q_2$''.

\begin{lemma} \label{sublemma_ascenseur}
Let $\omega=(P_1/Q_1,P_2/Q_2,P_3/Q_3)\in\Q^3$ with $P_i, Q_i$ coprime
positive integers and
$$
 z=((x_1,0),(x_2,0),(x_3,0)) \in B(0,R).
$$
Set
 \begin{align*}
T_1&=\Phi_{2,1,x_2,Q_2^{-3}} \circ  \Phi_{1,3,x_1,Q_1^{-3}} \circ S_{{\omega}} \\
T_2&=\Phi_{3,2,x_3,Q_3^{-3}} \circ  \Phi_{2,1,x_2,Q_2^{-3}} \circ S_{{\omega}} \\
T_3&=\Phi_{1,3,x_1,Q_1^{-3}} \circ  \Phi_{3,2,x_3,Q_3^{-3}} \circ S_{{\omega}} 
\end{align*}

Then the following properties hold.

\medskip

\noindent {\bf I)} If $Q_3\vert Q_1$ and $Q_1\vert Q_2$, and  
$$
x_1\geq 1/Q_1,\quad  x_2\geq0,\quad  x_3\geq1/Q_3,
$$
then there exists $N$ such that 
$$
T_1^{N}(z)=((\hat{x}_1,0),(\hat{x}_2,0),(\hat{x}_3,0))
$$
with 
$$
0\leq\hat{x}_1\leq x_1/2,\quad 0\leq\hat{x}_2= x_2,\quad 0\leq\hat{x}_3\leq x_3,
$$ and 
$\abs{T_1^{m}(z)_i} \leq x_i$ for all $m \in \{0,\ldots,N\}$.


\medskip

\noindent {\bf II)} If $Q_1\vert Q_2$ and $Q_2\vert Q_3$, and  
$$
x_1\geq 0,\quad  x_2\geq1/Q_2,\quad  x_3\geq1/Q_3,
$$
then there exists $N$ such that 
$$T_2^{N}(z)=((\hat{x}_1,0),(\hat{x}_2,0),(\hat{x}_3,0))$$
with 
$$
0\leq\hat{x}_1\leq x_1,\quad 0\leq\hat{x}_2\leq x_2/2,\quad 0\leq\hat{x}_3= x_3,
$$ and 
$\abs{T_2^{m}(z)_i} \leq x_i$ for all $m \in \{0,\ldots,N\}$. 

\medskip 

\noindent {\bf III)}  If $Q_2\vert Q_3$ and $Q_3\vert Q_1$, and  
$$
x_1\geq 1/Q_1,\quad  x_2\geq0,\quad  x_3\geq1/Q_3,
$$
then there exists $N$ such that 
$$T_3^{N}(z)=((\hat{x}_1,0),(\hat{x}_2,0),(\hat{x}_3,0))$$
with 
$$
0\leq\hat{x}_1= x_1,\quad 0\leq\hat{x}_2\leq x_2,\quad 0\leq\hat{x}_3\leq x_3/2,
$$ and 
$\abs{T_3^{m}(z)_i} \leq x_i$ for all $m \in \{0,\ldots,N\}$.


\end{lemma}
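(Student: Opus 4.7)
I prove Part I; Parts II and III follow by cyclic permutation of indices. The divisibility $Q_3\mid Q_1\mid Q_2$ forces $S_\omega^{Q_2}=\Id$, so one ``super-cycle'' of $Q_2$ iterations of $T_1$ returns each factor to its starting position modulo the two Hamiltonian bumps, which are designed to fire only at synchronized moments when specific factors are back on their positive horizontal axis, producing a net contraction.

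\textbf{Activation pattern and one super-cycle.} The elementary estimate $|\sin(\pi k/Q_1)|\ge 2/Q_1$ for $k\not\equiv 0\pmod{Q_1}$, combined with $x_1\ge 1/Q_1$, shows that $R_{P_1/Q_1}^{k}(x_1,0)$ lies at distance $\ge 4|x_1|/Q_1\ge 4/Q_1^{2}>Q_1^{-3}$ from $(x_1,0)$ whenever $Q_1\nmid k$. By Lemma~\ref{lemma.phi}(a) applied to the pair $(1,3)$, the bump $\Phi_{1,3,x_1,Q_1^{-3}}$ is therefore the identity along the $T_1$-orbit except when $m\equiv 0\pmod{Q_1}$. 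An analogous analysis describes $\Phi_{2,1,x_2,Q_2^{-3}}$, which fires only at multiples of $Q_2$ when $x_2\ge 1/Q_2$; when $x_2$ is smaller the 2nd factor sits inside the flat plateau of the bump so its Hamiltonian gradient vanishes and the 2nd factor stays pinned while the 1st-factor action is uniformly the tiny flow $\Phi^{g_R}_{\exp(-c\,Q_2^{6/(\alpha-1)})}$. Inducting on $k$, at each $m=kQ_1$ the state is back on the axes: $Q_3\mid Q_1$ puts the 3rd factor on its axis at each firing of $\Phi_{1,3}$, and Lemma~\ref{lemma.phi}(c) contracts its horizontal coordinate by $\kappa_1:=1-\tfrac12\exp(-c\,Q_1^{6/(\alpha-1)})$, while leaving the 1st factor invariant (it lies on its own axis, preserved by Lemma~\ref{lemma.phi}(b)) and the 2nd factor untouched (it does not enter the $(1,3)$ Hamiltonian at all). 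At $m=Q_2$ the 1st factor is also back on its axis (since $Q_1\mid Q_2$), so $\Phi_{2,1}$ contracts it by $\kappa_2:=1-\tfrac12\exp(-c\,Q_2^{6/(\alpha-1)})$ and still leaves the 2nd factor untouched, because the 1st lies on the zero-set of $g_R$. One super-cycle thus gives
\[
T_1^{Q_2}(z)=((x_1',0),(x_2,0),(x_3',0)),\qquad 0\le x_1'\le \kappa_2 x_1,\qquad 0\le x_3'\le \kappa_1^{Q_2/Q_1} x_3.
\]

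\textbf{Iteration and intermediate bound.} Set $N:=MQ_2$ with $M$ minimal such that $\kappa_2^M\le 1/2$; applying the super-cycle estimate $M$ times in succession yields the claimed $\hat x_1,\hat x_2,\hat x_3$. The intermediate bound $|T_1^m(z)_i|\le x_i$ combines three facts: (i) for any $\theta$, $|R_\theta(y,0)|=|y|\max(|\cos\theta|,|\sin\theta|)\le |y|$, so each application of $S_\omega$ can only decrease the sup-norm of an axis-aligned vector; (ii) Lemma~\ref{lemma.phi}(b) at every firing of a bump, guaranteeing that the contracted coordinate only shrinks; (iii) between consecutive axis returns of a given factor, that factor is merely a rotated version of its axis value at the previous return. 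The main difficulty lies in the regime $x_2\lesssim Q_2^{-3}$, where $\Phi_{2,1}$ fires at every iteration and the 1st factor undergoes the hyperbolic shear of $g_R$ repeatedly; here one must use that the shearing time $\exp(-c\,Q_2^{6/(\alpha-1)})$ is so small that the accumulated sup-norm distortion over all $N$ steps remains absorbable, which is achievable by choosing $Q_2$ sufficiently large in the outer induction that builds $\omega$. Everything else is bookkeeping around the synchronized firings.
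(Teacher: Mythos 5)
Your core mechanism — orbit separation ($x_i\ge 1/Q_i$ forces the bump centred at $(x_i,0)$ of radius $Q_i^{-3}$ to be visited only at multiples of $Q_i$), synchronization via the divisibility chain, and the three properties of Lemma~\ref{lemma.phi} — is exactly the paper's. But you chose to prove Part~I, which is the one case where the relaxed hypothesis ($x_2\ge 0$) sits on a coordinate that \emph{triggers} one of the bumps, namely $\Phi_{2,1,x_2,Q_2^{-3}}$, and your treatment of that regime has a genuine gap. Your dichotomy is: either $x_2\ge 1/Q_2$ (firing only at multiples of $Q_2$) or $x_2$ so small that the whole rotation orbit of $(x_2,0)$ stays in the plateau $B((x_2,0),Q_2^{-3}/2)$, which requires $x_2\lesssim Q_2^{-3}$. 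For $x_2$ between these two thresholds neither scenario applies: the orbit of the second factor re-enters the annulus where $f_{(x_2,0),Q_2^{-3}}$ is non-constant at times that need not be multiples of $Q_1$, so $\Phi_{2,1}$ fires while the first factor is off its axis; then $g_R(s_1)\neq 0$ and $X_f(s_2)\neq0$, so the second factor moves (destroying $\hat x_2=x_2$ and the whole synchronization) and the first factor leaves its circle. Moreover, even in the plateau regime your appeal to "choosing $Q_2$ sufficiently large in the outer induction" is not admissible — the lemma is a statement about a fixed $\omega$ — and it is also quantitatively wrong: halving $x_1$ by the contraction factor $\kappa=1-\tfrac12\exp(-c\,Q_2^{6/(\alpha-1)})$ takes $N\sim\exp(c\,Q_2^{6/(\alpha-1)})$ firings, so the product of $N$ per-step hyperbolic distortions of size $\exp(-c\,Q_2^{6/(\alpha-1)})$ is of order $1$, not small, uniformly in $Q_2$. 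Controlling it would require a cancellation argument (averaging the traceless shear over the rotation orbit) that you do not make.

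The reduction of Parts II and III to Part I "by cyclic permutation" also fails. The permutation $1\mapsto2\mapsto3\mapsto1$ does carry the bumps of $T_1$ to those of $T_2$, but it carries the hypotheses of I to "$x_1\ge 1/Q_1$, $x_2\ge 1/Q_2$, $x_3\ge 0$", which is neither the hypothesis of II (there $x_1\ge0$ is free and $x_3\ge1/Q_3$ is required) nor implied by it. In fact II and III are the \emph{easy} cases: their relaxed coordinate is only ever a driven coordinate (acted on through $g_R$ while on its axis), never a trigger, so the difficulty above never arises; that is why the paper proves II in detail and the same lines give III. If you insist on proving I, you should either add the hypothesis $x_2\ge 1/Q_2$ (which is what is actually available when Part~I is invoked in the proof of Proposition~\ref{prop:inductive.step}, via \eqref{eq:ovK}) or supply the missing analysis of the intermediate range of $x_2$; as written, the proof is incomplete precisely where it departs from the paper's route.
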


\begin{proof} We will prove the Lemma  for $T_2$ since it will be the first map that we will use in the sequel. The proof for the maps $T_1$ and $T_3$ follows exactly the same lines. 

\vskip2mm

%

The hypothesis $x_2\geq 1/Q_2$  implies that the orbit of $z_2=(x_2,0)$ under the rotation  $R_{\om_2}$  enters the $Q_2^{-3}$ neighborhood of $z_2$  only at times that are multiples of $Q_2$. Moreover $R_{\om_2}^{\ell Q_2}(z_2)=z_2$. A similar remark
holds for $z_3$.

\vskip2mm

Since $Q_3>Q_2$, we consider the action of $\cT\defeq\Phi_{2,1,x_2,Q_2^{-3}}\circ S_{\om}$ first. 
Since $Q_1\mid Q_2$, if $s=(s_1,s_2,s_3)$ with $s_1=(u_1,0)$ and $s_2=(u_2,0)$, by Lemma \ref{lemma.phi}:
$$
\cT^{m}(s)=(s_{1,m},R^m_{\om_2}(s_2),R^m_{\om_3}(s_3)) \quad 
\text{for all $m\in\N$,}
%
$$
with
$$
\abs{s_{1,m}}\leq \abs{s_1}.
$$

\vskip1mm

Consider now the orbit of $z$ under the full diffeomorphism $T_2$. Since $Q_2\mid Q_3$,  the
previous remark shows that one has to take the effect of $\Phi_{3,2,x_3,Q_3^{-3}}$ into account only for
the iterates of order $m=\ell Q_3$. One therefore gets
$$
T_2^{m}(z)=(z_{1,m},z_{2,m},R^m_{\om_3}(z_3)),\quad 
\text{for all $m\in\N$,}
%
$$
where in particular
$z_{2,\ell Q_3} =(x_{2,\ell Q_3},0)$ 
with 
$$
0<x_{2,(\ell+1) Q_3}  \leq (1-\tdemi \exp(- c Q_3^{\frac{6}{\al-1}}) )x_{2,\ell Q_3},
$$
and where 
$$
z_{2,\ell Q_3+\ell'} = R_{\om_2}^{\ell'}(z_{2,\ell Q_3}),\quad 1\leq \ell'\leq Q_3-1,
$$ 
$$
\abs{z_{1,m}}\leq x_1, \quad 
\text{for all $m\in\N$.}
%
$$

\vskip2mm
 
We let $L$  be the smallest integer such that  $0<x_{2,LQ_3}\leq x_2/2$ and get the conclusion with $N=LQ_3$. 
\end{proof}


\section{Proof of Theorem \ref{theo.main}}\label{Sec:proof}

The proof is based on an iterative process (Proposition~\ref{prop:iterative}) which is itself based
on the following preliminary result. 
For positive integers $q_1,q_2,q_3$, the notation $q_3\vert q_1\vert
q_2$ means ``$q_3$ divides~$q_1$ and $q_1$ divides~$q_2$''.

\begin{prop}  \label{prop:inductive.step} 

Let $\om=(p_1/q_1,p_2/q_2,p_3/q_3) \in \Q_+^3$ 
with $q_3\vert q_1\vert q_2$ 
and
%
%
$z=((x_1,0),(x_2,0),(x_3,0)) \in B(0,R)$
%
%
%
with $x_1, x_2, x_3 >0$ and $x_2\ge 1/q_2$. 
Then, for any $\eta>0$, there exist 

\begin{itemize}

\item[$(a)$]   $\ov{\om} =(\bp_1/\bq_1,\bp_2/\bq_2,\bp_3/\bq_3)$
%
%
such that $\bq_3\vert\bq_1\vert\bq_2$,
the orbits of the translation of vector $\ov \om$ on $\T^3$ are
$\eta$-dense 
and $\abs{\ov{\om}-\om}\leq \eta$;

\item[$(b)$]  $\bz=((\bx_1,0),(\bx_2,0),(\bx_3,0))$ such that $0<\ov x_i\leq x_i /2$ for every $i \in \{1,2,3\}$
%
and $\bx_2 \ge 1/\bq_2$;

\item[$(c)$] $z' \in X$, 
%
%
%
$\ha x_1\in(\ov x_1+\frac{1}{\ov q_1^3},x_1)$
and $N \in \N$,
such that $\abs{ z'-z}\leq \eta$
and the diffeomorphism 
$$
\cT=  \Phi_{2,1,\bx_2,\bq_2^{-3}} \circ  \Phi_{1,3,\ha x_1,\bq_1^{-3}} 
\circ  \Phi_{3,2,x_3,\bq_3^{-3}} 
\circ \Phi_{2,1,x_2,q_2^{-3}} 
\circ S_{\bom}
$$
satisfies  
$$
{\cT}^{N}(z')=\bz
$$  and $\abs{{\cT}^{m}(z')_i} \leq (1+\eta)x_i$ for $m \in \{0,\ldots,N\}$. 
\\[-1.5ex]

\end{itemize} 
%
%
Moreover, $\bq_1$, $\bq_2$ and~$\bq_3$ 
can be taken arbitrarily large.
\end{prop}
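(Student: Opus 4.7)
My plan is to analyze the iteration of $\cT$ by decomposing it into three successive stages, each realizing one of the three parts of Lemma~\ref{sublemma_ascenseur}. The four bumps composing $\cT$ group naturally into the overlapping pairs
\[
\bigl(\Phi_{2,1,x_2,q_2^{-3}},\,\Phi_{3,2,x_3,\bq_3^{-3}}\bigr),\quad
\bigl(\Phi_{3,2,x_3,\bq_3^{-3}},\,\Phi_{1,3,\ha x_1,\bq_1^{-3}}\bigr),\quad
\bigl(\Phi_{1,3,\ha x_1,\bq_1^{-3}},\,\Phi_{2,1,\bx_2,\bq_2^{-3}}\bigr),
\]
which realize in succession the mechanisms of Parts~(II), (III), (I) of Lemma~\ref{sublemma_ascenseur}: the three stages drive, respectively, $s_2$ down to $\bx_2 \le x_2/2$, then $s_3$ down to $\bx_3 \le x_3/2$, then $s_1$ down to $\bx_1 \le x_1/2$.

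First I would fix $\bom=(\bp_1/\bq_1,\bp_2/\bq_2,\bp_3/\bq_3)$ close to $\omega$, with $\bq_3\mid\bq_1\mid\bq_2$, with $|\bom-\omega|\le\eta$, with $\bq_i$ large enough that orbits of the translation by $\bom$ are $\eta$-dense on $\T^3$, and with suitable divisibilities between the $q_i$'s and the $\bq_j$'s so that the ``horizontal axis'' hypothesis of Lemma~\ref{lemma.phi}(c) is satisfied each time a bump activates (this ensures that the hyperbolic squeeze generated by $g_R$ actually contracts, rather than expands, the relevant coordinate). Such $\bom$ exist by Dirichlet-type approximation, and all these conditions are compatible with taking the $\bq_i$ arbitrarily large.

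Next, starting from a point $z'$ within distance $\eta$ of $z$ (the small perturbation serving to synchronize stage transitions), I would run $\cT$ stage by stage. In Stage~A, only $\Phi_{2,1,x_2,q_2^{-3}}$ and $\Phi_{3,2,x_3,\bq_3^{-3}}$ activate (since $s_1\neq\ha x_1$ and $s_2$ stays far from $\bx_2$); exactly as in the proof of Lemma~\ref{sublemma_ascenseur}(II), $s_2$ decreases monotonically down to $\bx_2$ while $s_1$ drops to some intermediate value $\ha x_1$. Stage~B begins when $s_1$ reaches $\ha x_1$, activating $\Phi_{1,3,\ha x_1,\bq_1^{-3}}$; the mechanism of Lemma~\ref{sublemma_ascenseur}(III) then reduces $s_3$ from $x_3$ down to $\bx_3$. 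Stage~C begins when $s_3$ has exited $B((x_3,0),\bq_3^{-3})$ (so that $\Phi_{3,2,x_3,\bq_3^{-3}}$ deactivates) and $s_2=\bx_2$ (so that $\Phi_{2,1,\bx_2,\bq_2^{-3}}$ activates); the mechanism of Lemma~\ref{sublemma_ascenseur}(I) then reduces $s_1$ from $\ha x_1$ down to $\bx_1$. The orbit bound $|\cT^m(z')_i|\le(1+\eta)x_i$ follows from the coordinatewise monotone contraction of Lemma~\ref{lemma.phi}(b).

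The hard part will be the synchronization of the three stages: the values $\bx_2$ and $\ha x_1$ produced dynamically must land in the required intervals $[1/\bq_2,\,x_2/2]$ and $(\bx_1+1/\bq_1^3,\,x_1)$; stage transitions must occur in the correct order with no unintended bump firing; and the horizontal-axis condition must be preserved throughout. These constraints are controlled by jointly tuning the denominators $\bq_i$ (which determine stage lengths via the per-activation contraction factor $\kappa=1-\tfrac12\exp(-c\nu^{-2/(\alpha-1)})$ of Lemma~\ref{lemma.phi}(c)), the initial perturbation $z'-z$, and the divisibility structure among the $q_i$'s and $\bq_j$'s. The possibility of taking $\bq_i$ arbitrarily large, which is essential for the inductive convergence in Section~\ref{Sec:proof}, follows from the fact that these requirements are only divisibility and density conditions.
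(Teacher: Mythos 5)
Your three-stage plan (Part II to halve $x_2$, then Part III to halve $x_3$, then Part I to halve $x_1$, with a perturbed initial point $z'$ and a divisibility chain on the denominators) is exactly the paper's strategy. But what you label ``the hard part'' --- the synchronization --- is not a detail to be tuned afterwards; it is the actual content of the proof, and your proposal does not supply the idea that makes it work. The paper resolves it by building $\cT$ \emph{incrementally}, one bump and one frequency component at a time, in an order interleaved with the dynamics: first choose $\ov q_3=\ha q_3$ (a large multiple of $q_2$) and run $\ha T_2=\Phi_{3,2,x_3,\ha q_3^{-3}}\circ\Phi_{2,1,x_2,q_2^{-3}}\circ S_{\ha\om}$ to produce $\ha z$ and, in particular, the value $\ha x_1$; only \emph{then} can one choose $\ov q_1=\til q_1$ (a large multiple of $\ha q_3$ with $\ha x_1>1/\til q_1$), since the next bump is centred at the dynamically determined $\ha x_1$. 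Your plan of fixing $\bom$ up front is therefore circular: $\bq_1$ cannot be selected before $\ha x_1$ is known. Each time a new bump $\Phi_{i,j,\cdot,\bq^{-3}}$ and a new frequency component are added, the key point is that the bump is $C^0$-exponentially close to $\Id$ (the prefactor $\exp(-c\,\nu^{-2/(\al-1)})$ in Definition~\ref{definPhiznu} with $\nu=\bq^{-3}$), so over the \emph{finitely many} iterates already constructed the enlarged map shadows the previous one in $C^0$, and a small correction of the initial point (producing $\bbz$, then $z'$) restores the \emph{exact} equalities $\bbT^{\ha N}(\bbz)=\ha z$ and $\cT^{\ha N+\til N}(z')=\til z$. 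Exactness is essential for the induction in Section~\ref{Sec:proof}, and nothing in your proposal produces it.

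The second missing ingredient is the deactivation check: after a coordinate has been contracted, one must verify --- by induction on $m$, using Lemma~\ref{lemma.phi}(a) --- that it never re-enters the support of the earlier bumps (e.g.\ $\cT^m(\til z)_2\in B(x_2,\ov q_2^{-3})^c$ and $\cT^m(\til z)_3\in B(x_3,\ov q_3^{-3})^c$ for all $m\ge0$), so that on the later portion of the orbit $\cT$ coincides exactly with $\til T_3$ and then with $\ov T_1$ and Lemma~\ref{sublemma_ascenseur} applies verbatim. Two smaller inaccuracies: the bound $\abs{\cT^m(z')_i}\le(1+\eta)x_i$ cannot come from Lemma~\ref{lemma.phi}(b) alone, since $z'$ need not lie on the horizontal axes --- it comes from the $C^0$-shadowing of the intermediate orbits, which do satisfy $\le x_i$; and the $\eta$-density of the orbits of $S_{\bom}$ is not a generic ``Dirichlet-type'' fact for a rational vector (whose orbits are finite) but follows from the specific chain $1/\ov q_3<\eta$, $\ov q_3/\ov q_1<\eta$, $\ov q_1/\ov q_2<\eta$ built into the successive choices.
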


\begin{proof}[Proof of Proposition \ref{prop:inductive.step}.] 
We divide the proof into three steps.

\paraga 
%
%
First choose coprime integers $\ha p_3$ and~$\ha q_3$ with~$\ha q_3$
large multiple of~$q_2$, so that
\begin{equation}\label{eq:choice}
q_1\vert q_2\vert \ha q_3,
\qquad x_2\geq \frac{1}{q_2},
\qquad x_3\geq \frac{1}{\ha q_3}, 
\qquad \frac{1}{\ha q_3}<\eta
\end{equation}
and the new rotation vector
$$
\ha{\omega}=(p_1/q_1,p_2/q_2 ,\ha p_3/\ha q_3)
$$ 
satisfies
$\abs{\ha \om-\om}<\eta$.
Set 
$$
\ha T_2=\Phi_{3,2,x_3,\ha q_3^{-3}} \circ  \Phi_{2,1,x_2,q_2^{-3}} \circ S_{\ha{\omega}}.
$$
By Lemma  \ref{sublemma_ascenseur} {\bf II)}, there exist $\ha{N}\in\N$ 
and $\ha{z}=((\ha{x}_1,0),(\ha{x}_2,0),(\ha{x}_3,0))$  such that 
$\ha T_2^{\ha{N}}(z)=\ha{z}$, 
with 
$$
\ha{x}_1\leq x_1,\quad\ha{x}_2\leq x_2/2,\quad\ha{x}_3= x_3,
$$
and 
$\abs{\ha T_2^{m}(z)_i}  \leq x_i$ for all $m \in \{0, \ldots,\ha N\}$. 

\paraga Next, 
consider a vector of the form
$$
\til{\omega}=(\til p_1/\til q_1,p_2/q_2 ,\ha p_3/\ha q_3)
$$ 
with coprime $\til p_1$ and $\til q_1$, and
\begin{equation}\label{eq:K}
\ha q_3\vert \til q_1,\qquad  \ha x_1>\frac{1}{\til q_1},\qquad  \frac{\ha q_3}{\til q_1}<\eta,
\end{equation}
so that in particular
\begin{equation}\label{eq:ineg}
\frac{\ha x_1}{2}>\frac{1}{\til q_1^3}.
\end{equation}
Set
$$
\til{T_3}=\Phi_{1,3,\ha x_1,\til q_1^{-3}} \circ  \Phi_{3,2,x_3,\ha q_3^{-3}} \circ S_{\til{\omega}}.
$$ 
By Lemma  \ref{sublemma_ascenseur} {\bf III)}, there exist $\til{N}\in\N$ and
$\til {z}=((\til {x}_1,0),(\til {x}_2,0),(\til {x}_3,0))$ such that $\til {T_3}^{\til {N}}(\ha{z})=\til {z}$  with
\be
\til {x}_1= \ha{x}_1,\quad \til {x}_2\leq \ha{x}_2\leq x_2/2,\quad \til {x}_3\leq \ha{x}_3/2=x_3/2,
\eeq
and 
$\abs{{\til  T_3}^{m}(\ha{z})_i}  \leq \ha{x}_i$ for all $m \in \{0,\ldots,\til  N\}$. 

Define now
$$
\bbT= \Phi_{1,3,\ha x_1,\til q_1^{-3}} 
\circ \Phi_{3,2,x_3,\ha q_3^{-3}} 
\circ \Phi_{2,1,x_2,q_2^{-3}} 
\circ  S_{\til{\omega}}.
$$ 
Choosing $\til q_1$ in (\ref{eq:K}) large enough and $\til p_1$ properly, 
one can assume that  $\til \om$ is arbitrarily close to $\ha \om$,  so that $S_{\til\om}$ is arbitrarily 
$C^0$-close to $S_{\ha\om}$ on the ball $B=B(0,\abs{z}+1)$, and moreover that
$\Phi_{1,3,\ha x_1,\til q_1^{-3}}$ is arbitrarily 
$C^0$-close to $\Id$ on $B$. As a consequence, one
can assume that $\bbT$ is arbitrarily $C^0$-close to $\ha T_2$ on  $B$. 
Hence one can choose $\til\om$ with $\abs{\til \om-\om}<\eta$ such that there exists $\bbz$ with 
$\abs{\bbz- z}<\eta$ which satisfies
$$
\bbT^{\ha{N}}(\bbz)=\ha{z},\qquad 
\abs {{\bbT}^{m}(\bbz)_i}  \leq (1+\eta) x_i
\quad\text{for all $m \in \{0,\ldots,\ha N\}$.}
$$
Moreover, using Lemma~\ref{lemma.phi},  one proves by induction that: 
$$
{\bbT}^{m}(\ha z)_2\in B(x_2,\ha q_2^{-3})^c,\qquad
\bbT^{m}(\ha{z})=\til{T_3}^{m}(\ha{z})
\quad\text{for all $m \in \{0,\ldots,\til N\}$.}
$$
As a consequence   
$$
{\bbT}^{\ha{N}+\til{N}}(\bbz)=\til T_3^{\til N}(\ha z)=\til{z}
$$ 
and $\abs{{\bbT}^{m}(\bbz)_i}< (1+\eta) x_i$ for all $m\in \{0,\ldots, \ha{N}+\til{N}\}$.

\paraga It remains now to perturb $\bbT$ in the same way as above to bring
the first component of $\til z$ closer to the origin.
Consider coprime integers~$\ov p_2$ and~$\ov q_2$ such that 
\begin{equation}\label{eq:ovK}
\til q_1\vert \ov q_2,
\qquad x_2\geq 1/\ov q_2,
\qquad \til x_2\geq 1/\ov q_2,
\qquad \frac{\til q_1}{\ov q_2}<\eta,
\end{equation}
and such that the vector
\begin{equation}\label{eq:omega}
\ov{\omega}=(\til p_1/\til q_1,\ov p_2/\ov q_2 ,\ha p_3/\ha q_3)
\end{equation}
satisfies $\abs{\ov \om-\om}<\eta$.
Set now
$$
\cT=  \Phi_{2,1,\tx_2,\bq_2^{-3}} 
\circ \Phi_{1,3,\ha x_1,\til q_1^{-3}} 
\circ \Phi_{3,2,x_3,\ha q_3^{-3}} 
\circ \Phi_{2,1,x_2,q_2^{-3}} 
\circ S_{\ov\om}.
$$


As above, a proper choice of $\ov p_2$ and $\ov q_2$ satisfying (\ref{eq:ovK}) makes $\cT$ arbitrarily $C^0$ close to $\bbT$ and
yields the existence of a $z'\in X$ such that $\abs{z'- z}< \eta$, satisfying 
$$
\cT^{\ha{N} +\til{N}}(z')=\til{z},\qquad \abs{{\cT}^{m}(z')_i} <
(1+\eta) x_i
\quad\text{for all $m\in \{0,\ldots, \ha{N}+\til{N}\}$.}
$$ 
Set 
$$
\bT_1=\Phi_{2,1,\tx_2,\bq_2^{-3}} \circ  \Phi_{1,3,\ha x_1,\bq_1^{-3}}  \circ S_{\ov \om}.
$$
Using Lemma  \ref{lemma.phi} and Lemma~\ref{sublemma_ascenseur} {\bf I)}, one proves by induction that now for $m\geq 0$:
$$
\cT^m(\til z)_2\in B(x_2, \ov q_2^{-3})^c,\qquad \cT^m(\til z)_3\in B(x_3, \ov q_3^{-3})^c,\qquad 
\cT^m(\til z)=\ov T_1^m(\til z).
$$
By Lemma~\ref{sublemma_ascenseur} {\bf I)} there exists 
 $\ov N$ such that 
 $$
 \bT_1^{\ov N}(\til{z})=\bz=((\ov{x}_1,0),(\ov{x}_2,0),(\ov{x}_3,0))
 $$
 with
  $$
  \ov{x}_1\leq \til{x}_1/2\leq x_1/2,\quad \ov{x}_2= \til{x}_2\leq x_2/2,\quad\ov{x}_3\leq \til{x}_3\leq x_3/2,
  $$ 
  and $\abs{({\ov T_1}^{m}(\til{z})_i} \leq \tx_i \leq x_i$ for all $m \in \{0,\ldots,\ov N\}$. 
As a consequence, setting $N=\ha N+\til N+\ov N$:
 $$
 {\cT}^{N}(z')=\bz,\qquad
\abs{{\cT}^{m}(z')_i} \leq (1+\eta)x_i
\quad\text{for all $m \in \{0,\ldots,N\}$.}
$$

We finally change the notation of (\ref{eq:omega}) and write
$$
{\ov \om}=(\ov\om_1,\ov\om_2,\ov\om_3)=(\ov p_1/\ov q_1,\ov p_2/\ov q_2 ,\ov p_3/\ov q_3),
$$
so that in particular $\til q_1=\ov q_1$, $\ha q_3=\ov q_3$ and
$$
\ov q_3\mid \ov q_1,\qquad \ov q_1\mid \ov q_2.
$$
Hence the orbits of $S_{\ov \om}$ are $\ov q_2$-periodic. Moreover,
from (\ref{eq:ineg}) and the equality $\ha x_1=\til x_1$, one deduces
$$
\ha x_1-\ov x_1>\frac{1}{\ov q_1^3}.
$$

Note finally that the last conditions in (\ref{eq:choice}), (\ref{eq:K}) and (\ref{eq:ovK}) now read
$$
\frac{1}{\ov q_3}<\eta,\qquad\frac{\ov q_3}{\ov q_1}<\eta,\qquad \frac{\ov q_1}{\ov q_2}<\eta.
$$
Fix $(\th_1,\th_2,\th_3)\in\T^3$ and recall that $\ov q_3\mid \ov q_1$ and $\ov q_1\mid \ov q_2$.
By the first inequality one can first find  $\ell_3\in\N$
such that $R^{\ell_3}_{\ov \om_3}(0)$ is $\eta$-close to $\th_3$. Then, by the second inequality  there is an $\ell_1\in\N$
such that $R_{\ov\om_1}^{\ell_1\ov q_3+\ell_3}(0)$ is $\eta$-close to $\th_1$. Finally, by the last inequality there
is an $\ell_2\in\N$ such that $R_{\ov\om_2}^{\ell_2\ov q_1+\ell_1\ov q_3+\ell_3}(0)$ is $\eta$-close to $\th_2$.
This proves that $S_{\ov \om}^{\ell_2\ov q_1+\ell_1\ov q_3+\ell_3}(0,0,0)$ is $\eta$-close to $(\th_1,\th_2,\th_3)$,
so that the orbits of $S_{\ov \om}$ are $\eta$-dense on $\T^3$. This concludes the proof.
 \end{proof}


\begin{defin} 
%
Given $z=(z_1,z_2,z_3) \in X$, we say that a diffeomorphism~$\Phi$ of~$X$ is $z$-{\it admissible}
if $\Phi \equiv \Id$ on
$$\{ s \in X : \abs{s_i}\leq  \tfrac{11}{10} \abs{z_i}, i=1,2,3 \} .$$
%
%
\end{defin}

\begin{prop}   \label{prop:iterative} 
Let $\om=(p_1/q_1,p_2/q_2,p_3/q_3) \in \Q_+^3$ 
with $q_3 \vert q_1 \vert q_2$
and 
%
%
$ z=((x_1,0),(x_2,0),(x_3,0)) \in B(0,R) $
%
%
%
with $x_1,x_2,x_3>0$ and $x_2\ge 1/q_2$. 
Suppose $\Phi \in \cU^{\al,L}$ is $z$-admissible and
$\norm{\Phi_{2,1,x_2,q_2^{-3}} \circ \Phi-\Id}_{\al,L} 
<\eps$,
where~$\eps$ is defined by Lemma~\ref{lemComposFlow}, and let
\[
T \defeq \Phi_{2,1,x_2,q_2^{-3}} \circ \Phi \circ  S_{\om}.
\]
Assume that $z_0\in X$ and $M\ge1$ are such that $T^M(z_0)=z$.
Then, for any $\eta>0$, there exist 
\begin{itemize}

\item[$(a)$] $\bom =(\bp_1/\bq_1,\bp_2/\bq_2,\bp_3/\bq_3)$
%
  such that $\bq_3\vert \bq_1 \vert \bq_2$,
  the orbits of the translation of vector $\bom$ on $\T^3$ are
  $\eta$-dense
  and $\abs{\bom-\om}\le \eta$;
%
%
\\[-1.5ex]

\item[$(b)$]  $\bz=((\bx_1,0),(\bx_2,0),(\bx_3,0))$ such that
  $0 < \bx_i \le x_i/2$ for every $i \in \{1,2,3\}$ and
  $\bx_2 \ge 1/\bq_2$;  
%
\\[-1.5ex]

\item[$(c)$] $\bar{z}_0 \in X$ such that $\abs{ \bar{z}_0-z_0}\leq
  \eta$, 
and $\bM \ge M$, 
and $\bPhi \in \cU^{\al,L}$ $\bz$-admissible, 
so that the diffeomorphism 
\[
\bT \defeq \Phi_{2,1,\bx_2,\bq_2^{-3}} \circ  \bPhi \circ S_{\bom}
\]
satisfies  $\bT^{\bM}(\bz_0)=\bz$  and $\abs{{\bT}^{m}(\bz_0)_i} \leq (1+\eta)x_i$ for all $m \in \big\{M,\ldots,\bM\big\}$. 
\\[-1.5ex]

\item[$(d)$] Moreover, 
%
%
$\norm{\Phi_{2,1,\bx_2,\bq_2^{-3}} \circ  \bPhi - \Phi_{2,1,x_2,q_2^{-3}} \circ  \Phi}_{\al,L} \le \eta$.
 \end{itemize} 

\end{prop}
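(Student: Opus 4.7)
My approach would be to apply Proposition~\ref{prop:inductive.step} with some auxiliary $\eta' < \min(\eta, 1/10)$ in place of~$\eta$, obtaining $\bom$, $\bz$, $z'$, $\ha{x}_1$, $N$ and the composition~$\cT$ from that proposition. I would then define
\[
\bPhi \defeq \Phi_{1,3,\ha{x}_1,\bq_1^{-3}} \circ \Phi_{3,2,x_3,\bq_3^{-3}} \circ \Phi_{2,1,x_2,q_2^{-3}} \circ \Phi, \qquad \bM \defeq M + N,
\]
so that $\bT = \Phi_{2,1,\bx_2,\bq_2^{-3}} \circ \bPhi \circ S_{\bom}$ would differ from~$\cT$ only by the insertion of the old~$\Phi$ immediately before $S_{\bom}$, and set $\bar{z}_0 \defeq \bT^{-M}(z')$ (well defined since $\bT$ is a diffeomorphism of~$X$).

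For the dynamical part, the key observation is that since $\Phi$ is $z$-admissible and $\eta' < 1/10$, the orbit $(\cT^n(z'))_{0\le n\le N}$ produced by Proposition~\ref{prop:inductive.step} lies coordinatewise in $\{|s_i|\le\tfrac{11}{10}|z_i|\}$, a region on which $\Phi=\Id$. Since $S_{\bom}$ preserves each slice $\{|s_i|\le r\}$ (rotations are norm-preserving on each $\R^2$-factor), a short induction would yield $\bT^n(z') = \cT^n(z')$ for $n=0,\ldots,N$. This gives $\bT^{\bM}(\bar{z}_0) = \cT^N(z') = \bz$, together with the orbit bound $|\bT^m(\bar{z}_0)_i| \le (1+\eta) x_i$ for $m \in \{M,\ldots,\bM\}$.

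For the $\bz$-admissibility of $\bPhi$, it is enough to check, for each of the three auxiliary factors, that its excitation ball is disjoint from the corresponding slice $\{|s_j|\le\tfrac{11}{10}\bx_j\}$ in the appropriate $\R^2$-factor. Using $\bx_i\le x_i/2$ and the relation $\ha{x}_1\ge 2\bx_1$ (which is built into the proof of Proposition~\ref{prop:inductive.step}), this reduces to ensuring that $q_2^{-3}$, $\bq_3^{-3}$ and $\bq_1^{-3}$ are small compared to fixed positive multiples of $x_2$, $x_3$ and $\bx_1$, all achievable for large denominators. Estimate~(d) would follow from the Banach algebra property of $G^{\al,L}(X)$ combined with~\eqref{ineqPhiijznu}: each of the three extra factors satisfies $\|\Phi_{i,j,\cdot,\nu}-\Id\|_{\al,L} \le K\exp(-c\,\nu^{-1/(\al-1)})$, which is arbitrarily small once $\bq_1,\bq_2,\bq_3$ are taken large.

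Finally, to bound $|\bar{z}_0-z_0|$, I would observe that on any fixed compact set containing the $T$-orbit $z_0,\ldots,z=T^M(z_0)$, the new diffeomorphism $\bT$ is $C^0$-close to $T$: the three auxiliary maps above are Gevrey- (hence $C^0$-) close to~$\Id$, while $\|S_{\bom}-S_\om\|_{C^0}$ on that compact set is controlled by $|\bom-\om|\le\eta'$. Since $M$ is fixed, iterating $M$ times only amplifies the closeness by a factor depending on $T$ and $M$, and combining with $|z'-z|\le\eta'$ yields $|\bar{z}_0-z_0|=|\bT^{-M}(z')-T^{-M}(z)|\le\eta$ for $\eta'$ small and $\bq_i$ large. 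The main difficulty will be orchestrating all these smallness requirements coherently; this is feasible precisely because Proposition~\ref{prop:inductive.step} permits $\bq_1,\bq_2,\bq_3$ to be chosen arbitrarily large independently of $\eta'$.
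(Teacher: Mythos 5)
Your handling of parts (a)--(c) coincides with the paper's: apply Proposition~\ref{prop:inductive.step}, insert the old~$\Phi$ into~$\cT$ to form~$\bT$, use the $z$-admissibility of~$\Phi$ together with the confinement $\abs{\cT^m(z')_i}\le(1+\eta')x_i\le\tfrac{11}{10}x_i$ and the fact that $S_{\bom}$ preserves $\abs{\,.\,}$ on each factor to get $\bT^m(z')=\cT^m(z')$, recover $\bz_0$ by $C^0$-comparison of~$\bT$ with~$T$ over the first $M$ iterates, and check $\bz$-admissibility of~$\bPhi$ by pushing the excitation balls $B((x_2,0),q_2^{-3})$, $B((x_3,0),\bq_3^{-3})$, $B((\ha x_1,0),\bq_1^{-3})$ away from the slabs $\{\abs{s_j}\le\tfrac{11}{10}\bx_j\}$. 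All of that is sound and is essentially the paper's argument.

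The gap is in your justification of~(d). The Banach algebra inequality~\eqref{ineqGevBanAlg} controls pointwise products $fg$, not compositions $f\circ g$, and what has to be estimated here is a composition: writing $\Phi_{2,1,\bx_2,\bq_2^{-3}}\circ\bPhi=\Phi^{u_3}\circ\Phi^{u_2}\circ\Phi^{u_1}\circ\Psi$ with $\Psi=\Phi_{2,1,x_2,q_2^{-3}}\circ\Phi$, the quantity to bound is $\norm{\Phi^{u_3}\circ\Phi^{u_2}\circ\Phi^{u_1}\circ\Psi-\Psi}_{\al,L}$, whose typical term is $\norm{(\Phi^{u}-\Id)\circ\chi}_{\al,L}$ with $\chi$ a map that is \emph{not} small. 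In Gevrey classes such a composition is not bounded by the product of the $(\al,L)$-norms; one needs Lemma~\ref{lemGevCompos}, which costs a strict loss in the Gevrey parameter (the outer factor must be controlled in $G^{\al,L'}$ with $L'>L$ --- this is exactly why the $\Phi_{i,j,z,\nu}$ are generated by Hamiltonians estimated in $G^{\al,L_1}$) and requires the inner map to be within $\epsc$ of the identity in the seminorm $\cN^*_{\al,L}$. This is precisely the role of the hypothesis $\norm{\Phi_{2,1,x_2,q_2^{-3}}\circ\Phi-\Id}_{\al,L}<\eps$ in the statement, which your argument never invokes --- a reliable sign that the step is missing. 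The correct route is to verify~\eqref{ineqHypPsiu} using that hypothesis and then apply Lemma~\ref{lemComposFlow}, whose conclusion~\eqref{eq.phibound} gives $\norm{\Phi^{u_3}\circ\Phi^{u_2}\circ\Phi^{u_1}\circ\Psi-\Psi}_{\al,L}\le C\big(\norm{u_1}_{\al,L_1}+\norm{u_2}_{\al,L_1}+\norm{u_3}_{\al,L_1}\big)$; only then does taking $\bq_1,\bq_2,\bq_3$ large yield~(d). As written, your appeal to the Banach algebra property does not go through.
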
 

\begin{proof}[Proof of Proposition \ref{prop:iterative}.]
  Take $\bom, \bz, N, z', 
\ha x_1$ as in Proposition \ref{prop:inductive.step} and
  let
$$
\cT=  \Phi_{2,1,\bx_2,\bq_2^{-3}} 
\circ  \Phi_{1,3,\ha x_1,\bq_1^{-3}} 
\circ  \Phi_{3,2,x_3,\bq_3^{-3}} 
\circ  \Phi_{2,1,x_2,q_2^{-3}} 
\circ S_{\bom}
$$
so that ${\cT}^{N}(z')=\bz$  and $\abs{{\cT}^{m}(z')_i} \leq (1+\eta)x_i$ for all $m \in \{0,\ldots,N\}$. 
If we define 
$$
\bT= \Phi_{2,1,\bx_2,\bq_2^{-3}} 
\circ \Phi_{1,3,\ha x_1,\bq_1^{-3}} 
\circ \Phi_{3,2,x_3,\bq_3^{-3}} 
\circ \Phi_{2,1,x_2,q_2^{-3}} 
\circ \Phi \circ S_{\bom}
$$
then, since $\Phi$ is $z$-admissible and $\abs{z-z'}<\eta$, we get
${\cT}^{m}(z')={\bT}^{m}(z')$ for all $m \in \{0,\ldots,N\}$, hence
${\bT}^{N}(z')=\bz$ and $\abs{{\bT}^{m}(z')_i} \leq (1+\eta)x_i$
for all $m \in \{0,\ldots,N\}$.

\smallskip

Let
\beglab{eqPhipdtinf}
\bPhi\defeq 
\Phi_{1,3,\ha x_1,\bq_1^{-3}} 
\circ \Phi_{3,2,x_3,\bq_3^{-3}} 
\circ \Phi_{2,1,x_2,q_2^{-3}} 
\circ \Phi,
\edla
so that, indeed, $\bT = \Phi_{2,1,\bx_2,\bq_2^{-3}} \circ \bPhi \circ S_\bom$.
Notice that we can write
$\Phi_{2,1,\bx_2,\bq_2^{-3}} \circ\bPhi = \Phi^{u_3}\circ \Phi^{u_2} \circ \Phi^{u_1} \circ \Psi$
(notation of Lemma~\ref{lemComposFlow}), where
$\Psi = \Phi_{2,1,x_2,q_2^{-3}} \circ \Phi$ and the Gevrey-$(\al,L_1)$
norms of $u_1$, $u_2$, $u_3$ are controlled by Lemma~\ref{lemma.f}; we
thus get~(d) by applying~\eqref{eq.phibound}, choosing
$\bq_1,\bq_2,\bq_3$ 
sufficiently large.

Comparing~$\bT$ and~$T$ in $C^0$-norm in the ball $B(0,\abs{z_0}+1)$,
since we can take $\bom$ arbitrarily close to~$\om$ and the $\bq_i$'s
arbitrarily large,
%
%
%
%
we can find $\bar{z}_0 \in X$
such that $\abs{ \bar{z}_0-z_0}\leq \eta$ and $\bT^M(\bz_0)=z'$. We
thus take $\bM=M+N$, so that ${\bT}^{\bM}(\bz_0)=\bz$ and
$\abs{{\bT}^{m}(\bz_0)_i} \leq (1+\eta)x_i$ for all
$m \in \big\{M,\ldots,\bM\big\}$.

\smallskip

To finish the proof of (c), 
just observe that  
$\bPhi \in \cU^{\al,L}$ and $\bPhi$
is $\bz$-admissible since $\bx_i \leq x_i/2$ and 
%
%
$\bq_1^{-3}\leq \ha x_1/10$,
$\bq_3^{-3}\leq x_3/10$
(possibly increasing 
$\bq_1$ and~$\bq_3$ if necessary).
\end{proof} 



\medskip

Clearly, Proposition \ref{prop:iterative} is tailored so that it can
be applied inductively.  The gain obtained when going from $T$ to
$\bT$ is twofold : on the one hand
the orbit of 
the new initial point $\bar{z}_0$ is pushed further close to the
origin, and on the other hand the rotation vector at the origin is
changed to behave increasingly like an non-resonant vector.

\begin{proof}[Proof of Theorem \ref{theo.main}]
Let $\ga>0$.
We pick
\[ \om\exnz=(p_1\exnz/q_1\exnz,p_2\exnz/q_2\exnz,p_3\exnz/q_3\exnz)
\in \Q_+^3 \]
with 
$q_3\exnz \vert q_1\exnz \vert q_2\exnz$,
and $x_1\exnz, x_2\exnz, x_3\exnz>0$ so that $x_2\exnz \ge 1/q_2\exnz$ and
\[
z_0\exnz \defeq ((x_1\exnz,0),(x_2\exnz,0),(x_3\exnz,0)) \in
B(0,R/2).
\]
Let $\Phi\exnz \defeq \Id$ and $M\exnz \defeq 0$.
Define 
\[
T\exnz \defeq \Psi\exnz \circ  S_{\om\exnz}
\quad \text{with} \quad
\Psi\exnz \defeq \Phi_{2,1,x_2\exnz,1/(q_2\exnz)^{3} }\circ \Phi\exnz.
\]
Choosing $q_2\exnz$ sufficiently large, we have $\norm{\Psi\exnz
  -\Id}_{\al,L} \le \min\{\eps/2,\ga/2\}$ by \eqref{ineqPhiijznu}. 
The hypotheses of Proposition \ref{prop:iterative} hold for $z\exnz=z_0\exnz$.
 
\medskip

We apply Proposition \ref{prop:iterative} inductively by choosing
inductively a
sequence $(\eta\exn)_{n\ge1}$ such that
\[
\eta\exn \le \min\Big\{
%
\frac{\eps}{2^{n+1}}, \frac{\ga}{2^{n+1}}, 1/10
%
\Big\},
\qquad
\sum_{k=n+1}^\infty \eta^{(k)} \le \frac{\eta\exn}{\bq_2\exn}
\]
(where $\bq_2\exn$ is determined at the $n$th step of the induction).
We get sequences 
$(\omega\exn)_{n\ge0}$, 
$(z_0\exn)_{n\ge0}$, 
$(z\exn)_{n\ge0}$,
$(T\exn)_{n\ge0}$,
$(M\exn)_{n\ge0}$, with 
\[
z\exn=((x_1\exn,0),(x_2\exn,0),(x_3\exn,0)),
\quad 
0 < x_i\exnp \le x_i\exn/2
\]
and $T\exn = \Psi\exn \circ S_{\om\exn}$ 
with $\Psi\exn 
= \Phi_{2,1,x_2\exn,1/(q_2\exn)^{3} }\circ \Phi\exn
\in \cU^{\al,L}$,
so that
\begin{multline}   \label{eqestimCauchy}
%
\abs{\om\exnp - \om\exn} \le \eta\exnp,
\quad
\abs{z_0\exnp - z_0\exn} \le \eta\exnp, \\
\quad
\norm{\Psi\exnp - \Psi\exn}_{\al,L} \le \eta\exnp.
\end{multline}
%
%
We also have
\begin{multline} 
\label{ineqOrbitMj}
\abs{({T\exnp}^{m}(z_0\exnp))_i} \leq 1.01 x_i^{(j)} \\
\ens\text{for all $m \in \{M^{(j)},\ldots,M^{(j+1)}\}$ with $j\leq n$.}
\end{multline} 
In view of~\eqref{eqestimCauchy}, the sequences $(z_0\exn)$, $(\om\exn)$ and $(\Psi\exn)$ are
Cauchy. 
We denote their limits by $z_0^\infty$, $\om^\infty$ and
$\Psi^\infty$. 
Notice that $\norm{\Psi^\infty-\Id}_{\al,L} \le \ga$.

\medskip

We obtain that ${\bf T} \defeq \Psi^\infty \circ S_{\om^\infty}$ 
satisfies $\abs{{\bf T}^m(z_0^{\infty})} \underset{m\to +\infty}{
  \longrightarrow} 0$,
because the ball $B(0,R)$ is a compact subset of~$X$ which contains all the points
${T\exnp}^{m}(z_0\exnp)$ and on which $T\exn \underset{n\to +\infty}{\longrightarrow} {\bf T}$ in the $C^0$
topology,
hence
${T\exnp}^{m}(z_0\exnp) \underset{n\to +\infty}{
  \longrightarrow} {\bf T}^m(z_0^{\infty})$ for each~$m$
and, in~\eqref{ineqOrbitMj}, we can first let~$n$ tend to~$\infty$ and then
use the fact that $x_i^{(j)} \downarrow 0$ and $M^{(j)}\uparrow
\infty$ as $j$ tends to~$\infty$.

\medskip

The orbits of the translation of vector $\om\exn$ on $\T^3$ 
being $\eta\exn$-dense and $\bq_2\exn$-periodic,
%
we see that ${\om}^\infty$ defines a minimal translation on $\T^3$.
Indeed, given $\th\in\T^3$ and $\eps>0$, we can choose $n,m\in\N$ so that
$\eta\exn\le\eps/2$,
$\dist(m\om\exn-\th,\Z^3) \le \eta\exn$
and $m < \bq_2\exn$.
Then,
\[ \dist(m\om^\infty-\th,\Z^3) \le \eta\exn +
m\abs{\om^\infty-\om\exn}
\le \eta\exn +
\bq_2\exn \sum_{k=n+1}^\infty \eta^{(k)}
\le 2\eta\exn\]
which is $ \le \eps$.
Hence the orbit of~$0$ under the translation of vector $\om^\infty$ is
$\eps$-dense for every~$\eps$,
which entails that ${\om}^\infty$ is non-resonant.

\medskip

The proof of Theorem \ref{theo.main} is thus complete. \end{proof}



\appendix
\section{Gevrey functions, maps and flows}\label{App:Gevrey}
\setcounter{thm}{-1}


\subsection{Gevrey functions and Gevrey maps}

We follow Section~1.1.2 and Appendix~B of \cite{mcwdGev}, with some simplifications
stemming from the fact that here we only need to consider
functions satisfying uniform estimates on the whole of a Euclidean space.

\subsubsection*{The Banach algebra of uniformly Gevrey-$(\al,L)$ functions}
Let $N\ge1$ be integer and $\al\ge1$ and $L>0$ be real. 
We define
\begin{multline*}
\label{eq:defGalL}
G^{\al,L}(\R^N) \defeq \{ f\in C^\infty(\R^N) \mid \norm{f}_{\al,L} <\infty \},
\\ 
\norm{f}_{\al,L} \defeq \sum_{\ell\in\N^{N}}
\frac{L^{\abs{\ell}\al}}{\ell !^\al} \norm{\pa^\ell f}_{C^0(\R^N)}.
\end{multline*}
We have used the standard notations
$\abs{\ell} = \ell_1+\cdots+\ell_{N}$, $\ell! = \ell_1!\ldots\ell_{N}!$,
$\pa^\ell = \pa_{x_1}^{\ell_1}\ldots\pa_{x_N}^{\ell_{N}}$, 
and
\[
\N \defeq \{0,1,2,\ldots\}.
\]

The space $G^{\al,L}(\R^N)$ turns out to be a Banach algebra,
with
\beglab{ineqGevBanAlg}
\norm{fg}_{\al,L} \le \norm{f}_{\al,L} \norm{g}_{\al,L}
\edla
for all $f,g\in G^{\al,L}(\R^N)$,
%
and there are ``Cauchy-Gevrey inequalities'':
if $0 < L' < L$,
then all the partial derivatives of~$f$ belong to $G^{\al,L'}(\R^N)$ and,
for each $p\in\N$, 
\begin{equation}	\label{ineqGevCauch}
%
\sum_{m\in \N^N;\ |m|=p} \norm{\pa^m f}_{\al,L'} \le 
\frac{p!^\al}{(L-L')^{p\al}} \norm{f}_{\al,L}  
\end{equation}
(see \cite{hms}).


\subsubsection*{The Banach space of uniformly Gevrey-$(\al,L)$ maps}
Let $N,M\ge1$ be integer and $\al\ge1$ and $L>0$ be real. 
We define
\begin{multline*}
\label{eq:defGalL}
G^{\al,L}(\R^{N},\R^{M}) \defeq 
\{ F
\in C^\infty(\R^N,\R^M) \mid \norm{F}_{\al,L} <\infty \},
\\
\norm{F}_{\al,L} \defeq \norm{F\cc1}_{\al,L} + \cdots + \norm{F\cc M}_{\al,L} .
\end{multline*}
This is a Banach space.
\medskip

When $N=M=2n$, we denote by $\Id+G^{\al,L}(\R^{2n},\R^{2n})$ the set
of all maps of the form $\Psi=\Id+F$ with $F \in G^{\al,L}(\R^{2n},\R^{2n})$.
This is a complete metric space for the distance 
$\dist(\Id+F_1,\Id+F_2) = \norm{F_2-F_1}_{\al,L}$.
We use the notation
\[ \dist(\Psi_1,\Psi_2) = \norm{\Psi_2-\Psi_1}_{\al,L} \]
as well. We then define 
\[ \cU^{\al,L} \subset \Id+G^{\al,L}(\R^{2n},\R^{2n}) \]
as the subset consisting of all Gevrey-$(\al,L)$ symplectic diffeomorphisms of~$\R^{2n}$ which
fix the origin and are $C^\infty$-tangent to~$\Id$ at the origin.
This is a closed subset of the complete metric space
$\Id + G^{\al,L}(\R^{2n},\R^{2n})$.


\subsubsection*{Composition with close-to-identity Gevrey-$(\al,L)$ maps}

Let $N\ge1$ be integer and $\al\ge1$ and $L>0$ be real. We use the
notation
$\NN \defeq \N^N \setminus \{0\}$
and define
\[ 
\cN^*_{\al,L}(f) \defeq 
\sum_{\ell\in\NN} \frac{L^{\abs{\ell}\al}}{\ell!^\al} 
\norm{\pa^\ell f}_{C^0(\R^N)},
\] 
so that
$\norm{f}_{\al,L} = \norm{f}_{C^0(\R^N)} + \cN^*_{\al,L}(f)$.


\begin{lemma}  \label{lemGevCompos}
Let $L_1>L$. There exists $\epsc = \epsc(N,\al,L,L_1)$ such that,
for any $f\in G^{\al,L_1}(\R^N)$
and $F=(F\cc1,\ldots,F\cc N)\in G^{\al,L}(\R^N,\R^N)$, if
\[ 
\cN^*_{\al,L}(F\cc1), \ldots, \cN^*_{\al,L}(F\cc N) \le \epsc,
\]
then $f\circ(\Id+F) \in G^{\al,L}(\R^N)$ and 
$\norm{f\circ(\Id+F)}_{\al,L} \le \norm{f}_{\al,L_1}$. 
\end{lemma}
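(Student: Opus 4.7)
The plan is to estimate $\pa^\ell(f\circ(\Id+F))$ by the multivariate Fa\`a di Bruno formula and then sum, combining the Banach algebra property~\eqref{ineqGevBanAlg} for products of derivatives of $\Id+F$ with the Cauchy--Gevrey inequality~\eqref{ineqGevCauch} applied to~$f$. This is the classical route for Gevrey composition estimates, carried out in Appendix~B of~\cite{mcwdGev}, which the paper already cites as the source of its Gevrey framework.

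Concretely, setting $G=\Id+F$, the first step is to write each derivative $\pa^\ell(f\circ G)$ as a finite sum of terms of the form $(\pa^m f)\circ G\cdot \prod_j \pa^{\beta^j}G\cc{s_j}$, where $|m|$ equals the number of $\beta^j$'s, each $|\beta^j|\ge 1$, and $\sum_j \beta^j = \ell$, with combinatorial coefficients depending only on $(\ell,m,\mathbf{s},\boldsymbol{\beta})$. Using the trivial bound $\norm{(\pa^m f)\circ G}_{C^0}\le\norm{\pa^m f}_{C^0}$, weighting by $L^{|\ell|\al}/\ell!^\al$, summing over~$\ell$ and reorganising by~$m$, the Banach algebra property applied to the block derivatives of each~$G\cc i$ yields a bound of the schematic form
\[
\norm{f\circ G}_{\al,L}\ \le\ \sum_{m\in\N^N}\frac{1}{m!}\,\norm{\pa^m f}_{\al,L}\,\prod_{i=1}^N A_i^{m_i},
\]
where each $A_i$ is controlled by a universal constant plus $\cN^*_{\al,L}(F\cc i)$ (the constant coming from the identity part of~$G$, the other term from~$F$).

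The second step is to apply the Cauchy--Gevrey inequality~\eqref{ineqGevCauch} with $L'=L$ and $L$ replaced by~$L_1$, which controls the factors $\pa^m f$: summing over $m$ with $|m|=p$ introduces a factor $p!^\al/(L_1-L)^{p\al}$ times $\norm{f}_{\al,L_1}$. Combined with the factor $1/m!$ and the multinomial identity, this turns the previous bound into an estimate dominated, up to polynomial prefactors, by $\norm{f}_{\al,L_1}\sum_{p\ge 0} B^p$ with $B$ proportional to $\bigl(\max_i A_i\bigr)^\al/(L_1-L)^\al$; picking $\epsc=\epsc(N,\al,L,L_1)$ small enough that $\max_i\cN^*_{\al,L}(F\cc i)\le\epsc$ forces $B<1$ and convergence, yielding the desired bound $\norm{f\circ(\Id+F)}_{\al,L}\le\norm{f}_{\al,L_1}$.

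The main obstacle is the combinatorial bookkeeping: verifying that the Fa\`a di Bruno coefficients combine with the $\ell!^\al$ denominators of the Gevrey norm in such a way that the $p!^\al$ factor arising from Cauchy--Gevrey is fully absorbed. The cushion $L_1-L$ provides exactly the geometric gain needed, and the close-to-identity hypothesis controls the dominant factor; conceptually this is routine, but the detailed estimation is delicate and is precisely the content of Appendix~B of~\cite{mcwdGev}, which can be invoked to conclude.
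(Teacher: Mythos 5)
Your overall framework (Fa\`a di Bruno, reorganise by the multi-order $m$ of the derivative falling on $f$, absorb the close-to-identity part) is the right starting point, but the mechanism you propose for closing the estimate does not work, and it is precisely at the step you flag as ``the main obstacle'' that your argument diverges -- literally. You propose to bound the $f$-factors by applying the Cauchy--Gevrey inequality~\eqref{ineqGevCauch} from level $L_1$ down to $L$, which costs a factor $p!^{\al}/(L_1-L)^{p\al}$ for $|m|=p$. Against this you only have the single factorial $1/m!$ from the Fa\`a di Bruno/multinomial coefficients, so even in the most favourable reorganisation you are left with
\[
\norm{f}_{\al,L_1}\sum_{p\ge0}\frac{p!^{\al-1}}{(L_1-L)^{p\al}}\Big(\sum_i A_i\Big)^{p},
\]
and $p!^{\al-1}$ is not a ``polynomial prefactor'': for $\al>1$ this series diverges no matter how small $\epsc$ is (and the $A_i$ cannot even be made small, since they contain an $O(1)$ contribution from the identity part of $\Id+F$). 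So the claimed geometric-series bound $\sum_p B^p$ with $B<1$ is not attainable by this route, and the conclusion $\norm{f\circ(\Id+F)}_{\al,L}\le\norm{f}_{\al,L_1}$ (with constant exactly $1$, which is actually used later in the fixed-point argument of Lemma~\ref{lemGevHamFlow}) does not follow.

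The paper's proof avoids Cauchy--Gevrey on $f$ entirely. The point is that the Gevrey weight of $g=f\circ(\Id+F)$ carries $k!^{\al}$ in the denominator while Fa\`a di Bruno only produces first powers of factorials; the whole content of the lemma is the comparison of $D=\ell!\,n!\,(m+n)!^{\al-1}k^1!\cdots k^{|\ell|}!$ with $\ti D=\ell!\,n!\,(\ell+n)!^{\al-1}k^1!^{\al}\cdots k^{|\ell|}!^{\al}$, carried out via $k^1!\cdots k^s!\le N^s m!/s!$ and $\tfrac{(\ell+n)!}{\ell!\,n!}\le(1+1/a)^{|\ell|}(1+a)^{|n|}$. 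This yields $\ti D/D\le\la^{|\ell|}\mu^{|n|}$ with geometric (not factorial) losses, after which the inner sums over the $k^p$ reproduce exactly $\cN^*_{\al,L}(F\cc i)^{\ell_i}\le\epsc^{|\ell|}$ and the whole expression reassembles into $\sum_k(\mu L^{\al}+\la\epsc)^{|k|}\norm{\pa^kf}_{C^0}/k!^{\al}$, which equals $\norm{f}_{\al,L_1}$ once $\epsc$ is chosen so that $\mu L^{\al}+\la\epsc= L_1^{\al}$. In short: the loss from $L$ to $L_1$ is spent on redistributing factorials inside the Fa\`a di Bruno sum, not on differentiating $f$. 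Deferring this computation to Appendix~B of \cite{mcwdGev} would be legitimate only if your surrounding argument were consistent with it; as written, your reduction to a Cauchy--Gevrey estimate on $f$ is a wrong turn that no choice of $\epsc$ can repair.
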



\begin{proof}
Since $L<L_1$, we can pick $\mu>1$ such that $\mu L^\al < L_1^\al$; we
then choose $a>0$ such that $(1+a)^{\al-1}\le\mu$ and set 
$\la \defeq \big( N(1+1/a) \big)^{\al-1}$. We will prove the lemma
with
$\epsc \defeq (L_1^\al - \mu L^\al) / \la$.

Let $f$ and~$F$ be as in the statement,
and $g \defeq f\circ(\Id+F)$. Computing the Taylor expansion of 
$g(x+h) = f(x+h+F(x+h))$ at $h=0$, we get,
for each $k\in \N^N$, $\dfrac{1}{k!}\pa^k g = $
\[
\sum_{\stackrel{\scriptstyle \ell,m,n\in\N^N}{m+n=k}} 
\frac{(\pa^{\ell+n}f)\circ (\Id+F)}{\ell!\, n!}
\sum_{\stackrel{\scriptstyle k^1,\ldots,k^{\abs{\ell}}\in\NN}{k^1+\cdots+k^{\abs{\ell}}=m}}
\frac{\prod\limits_{i=1}^{N} \;
\prod\limits_{\ell_1+\cdots+\ell_{i-1} < p \le \ell_1+\cdots+\ell_i}
\pa^{k^p} F\cc i}{k^1!\cdots k^{\abs{\ell}}!}
\]
with the convention that an empty sum is~$0$ and an empty product is~$1$.
Note that if $\ell=0$, then necessarily $m=0$ and the corresponding
contribution to the sum is $\frac{1}{k!}(\pa^kf)\circ (\Id+F)$,
whereas $\ell\neq0$ implies $m\neq0$ and $k\neq0$.

We have $\norm{g}_{C^0(\R^N)} \le \norm{f}_{C^0(\R^N)}$ and, for each $k\in\NN$,
\[
\frac{1}{k!} \norm{\pa^k g}_{C^0} \le
\frac{1}{k!} \norm{\pa^k f}_{C^0} +
\sum_{\stackrel{\scriptstyle \ell,m,n\in\N^N}{\ell\neq0,\ m+n=k}} 
\frac{\norm{\pa^{\ell+n}f}_{C^0}}{\ell!\, n!}
\sum_{\stackrel{\scriptstyle k^1,\ldots,k^{\abs{\ell}}\in\NN}{k^1+\cdots+k^{\abs{\ell}}=m}}
\frac{P}{k^1!\cdots k^{\abs{\ell}}!}
\]
with 
$P \defeq \prod\limits_{i=1}^{N} \;
\prod\limits_{\ell_1+\cdots+\ell_{i-1} < p \le \ell_1+\cdots+\ell_i}
\norm{\pa^{k^p} F\cc i}_{C^0}$.
Multiplying by $L^{\abs{k}\al}/k!^{\al-1}$ and taking the sum
over~$k$, we get
\beglab{ineqnormgalL}
\norm{g}_{\al,L} \le
\sum_{k\in\N^N} \frac{L^{\abs{k}\al}}{k!^\al} \norm{\pa^k f}_{C^0} +
S
\edla
with
\beglab{eqdefS}
S \defeq 
\sum_{\ell\in\NN,\ m,n\in\N^N}
\frac{L^{\abs{m+n}\al} \norm{\pa^{\ell+n}f}_{C^0}}{\ell!n!(m+n)!^{\al-1}}
\sum_{\stackrel{\scriptstyle k^1,\ldots,k^{\abs{\ell}}\in\NN}{k^1+\cdots+k^{\abs{\ell}}=m}}
\frac{P}{k^1!\cdots k^{\abs{\ell}}!}
\edla
with the same~$P$ as above.

Inequality~(A.7) from~\cite{hms} says that, if $s\ge1$ and
$k^1,\ldots,k^s\in\NN$ with $k^1+\cdots+k^s=m$, then
$k^1! \cdots k^s! \le N^s m! / s!$.
Hence, in each term of the sum~$S$, we can compare
$D \defeq \ell!n!(m+n)!^{\al-1}k^1!\cdots k^{\abs{\ell}}!$
and $\ti D \defeq \ell!n!(\ell+n)!^{\al-1}k^1!^\al\cdots k^{\abs{\ell}}!^\al$:
we have
\begin{multline*}
\frac{\ti D}{D} = 
\Big( \frac{ k^1!\cdots k^{\abs{\ell}}! (\ell+n)! }{ (m+n)! } \Big)^{\al-1}
\le \Big( \frac{ N^{\abs{\ell}} m! (\ell+n)! }{ \abs{\ell}! (m+n)! } \Big)^{\al-1}
\\
\le \Big( \frac{ N^{\abs{\ell}} (\ell+n)! }{ \ell!\, n! } \Big)^{\al-1}
\le \la^{\abs{\ell}} \mu^{\abs{n}},
\end{multline*}
where the last inequality stems from our choice of~$\la$ and~$\mu$,
using
$\frac{ (\ell+n)! }{ \ell!\, n! } \le (1+1/a)^{\abs{\ell}} (1+a)^{\abs{n}}$.
Inserting $\dfrac{1}{D} \le \dfrac{\la^{\abs{\ell}} \mu^{\abs{n}}}{\ti
  D}$ in~\eqref{eqdefS}, we obtain
\[
S \le 
\sum_{\ell\in\NN,\ n\in\N^N}
\frac{L^{\abs{n}\al} \la^{\abs{\ell}} \mu^{\abs{n}} \norm{\pa^{\ell+n}f}_{C^0}}{\ell!n!(\ell+n)!^{\al-1}}
\sum_{k^1,\ldots,k^{\abs{\ell}}\in\NN}
\frac{L^{|k^1+\cdots+k^{\abs{\ell}}|\al} P}{k^1!^\al\cdots k^{\abs{\ell}}!^\al}.
\]
The inner sum over $k^1,\ldots,k^{\abs{\ell}}\in\NN$ coincides with the product
$\cN^*_{\al,L}(F\cc1)^{\ell_1}\cdots \cN^*_{\al,L}(F\cc N)^{\ell_N}$,
which is $\le \epsc^{\abs{\ell}}$ by assumption.
Hence, coming back to~\eqref{ineqnormgalL}, we get
\[
\norm{g}_{\al,L} \le
\sum_{\ell, n\in\N^N}
\frac{(\mu L^\al)^{\abs{n}} (\la\epsc)^{\abs{\ell}} \norm{\pa^{\ell+n}f}_{C^0}}{\ell!n!(\ell+n)!^{\al-1}}
= \sum_{k\in\N^N} 
\frac{(\mu L^\al +\la\epsc)^{\abs{k}} \norm{\pa^k f}_{C^0}}{k!^\al}
\]
(we have used $\mu\ge1$ to absorb the first term of the \rhs
of~\eqref{ineqnormgalL} in the contribution of $\ell=0$).
The conclusion follows from our choice of~$\epsc$.
\end{proof}



\subsection{Estimates for Gevrey flows}
\label{secappGevFlEsti}


We need some improvements with respect to \cite{hms} and
\cite{mcwdGev} for the estimates of the flow of a small Gevrey
vector field.

\begin{lemma}  \label{lemGevHamFlow}
Suppose $\al\ge1$ and $0<L<L_1$.
%
%
\smallskip

\noindent (i) 
%
For every integer $N\ge1$, there exists $\epsf=\epsf(N,\al,L,L_1)$
such that, for every vector field $X \in G^{\al,L_1}(\R^N,\R^N)$, if
$\norm{X}_{\al,L_1} \le \epsf$,
%
then the time-$1$ map $\Phi$ of the flow generated by~$X$ belongs to 
$\Id+G^{\al,L}(\R^N,\R^N)$ and 
\begla
%
\norm{\Phi - \Id}_{\al,L} \le \norm{X}_{\al,L_1}. 
\edla

\noindent (ii) 
For every integer $n\ge1$, there exists $\epsH=\epsH(n,\al,L,L_1)$
such that, for every $u\in G^{\al,L_1}(\R^{2n})$,
%
if $\norm{u}_{\al,L_1} \le \epsH$, 
then the time-$1$ map~$\Phi^u$ of the Hamiltonian flow generated by~$u$
belongs to $\Id + G^{\al,L}(\R^{2n},\R^{2n})$
and
\beglab{ineqPhiuId}
\norm{\Phi^u - \Id}_{\al,L} \le 
2^\al(L_1-L)^{-\al} 
\norm{u}_{\al,L_1}.
\edla
%
%
\end{lemma}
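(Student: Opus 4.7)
The plan is to establish (i) via a bootstrap built around the composition estimate of Lemma~\ref{lemGevCompos}, and then to deduce (ii) by applying (i) to the Hamiltonian vector field $X_u$ after controlling its Gevrey norm through the Cauchy--Gevrey inequality~\eqref{ineqGevCauch}.

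For (i), I would work with the integral form of the flow: setting $F_t \defeq \Phi_t-\Id$, the ODE $\dot\Phi_t = X\circ\Phi_t$ is equivalent to
\[
F_t(x) = \int_0^t X\bigl(x+F_s(x)\bigr)\, ds, \qquad t\in[0,1].
\]
Existence of the flow on all of $\R^N$ for $t\in[0,1]$ is classical, since any element of $G^{\alpha,L_1}(\R^N,\R^N)$ has bounded first derivatives. The Gevrey estimate is then obtained by bootstrap (equivalently, by Picard iteration in $\Id+G^{\alpha,L}(\R^N,\R^N)$): as long as each component of $F_s$ has $\cN^*_{\alpha,L}$-seminorm at most $\epsc(N,\alpha,L,L_1)$ for every $s\in[0,t]$, Lemma~\ref{lemGevCompos} gives $\|X\circ(\Id+F_s)\|_{\alpha,L} \le \|X\|_{\alpha,L_1}$. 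Integrating componentwise then yields $\|F_t\|_{\alpha,L} \le t\,\|X\|_{\alpha,L_1}$, the $C^0$-part of the Gevrey norm being handled by the same integration together with the crude bound $\|X\|_{C^0}\le\|X\|_{\alpha,L_1}$. Choosing $\epsf(N,\alpha,L,L_1) \defeq \epsc(N,\alpha,L,L_1)/2$ keeps the bootstrap strictly below the threshold throughout $[0,1]$, and evaluation at $t=1$ gives the announced bound.

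For (ii), I would exploit the freedom between $L$ and $L_1$ by setting $L' \defeq (L+L_1)/2$. Since the $2n$ components of the Hamiltonian vector field $X_u$ are, up to signs, the first-order partial derivatives of~$u$, one obtains
\[
\|X_u\|_{\alpha,L'} \;=\; \sum_{|m|=1} \|\partial^m u\|_{\alpha,L'} \;\le\; \frac{1}{(L_1-L')^{\alpha}}\,\|u\|_{\alpha,L_1} \;=\; \frac{2^{\alpha}}{(L_1-L)^{\alpha}}\,\|u\|_{\alpha,L_1}
\]
directly from~\eqref{ineqGevCauch} with $p=1$. Setting
\[
\epsH(n,\alpha,L,L_1) \defeq 2^{-\alpha}(L_1-L)^{\alpha}\,\epsf(2n,\alpha,L,L'),
\]
the smallness hypothesis of (i) is met for $X_u$ with $L_1$ replaced by~$L'$, and applying (i) to $X_u$ delivers~\eqref{ineqPhiuId} with precisely the factor $2^\alpha(L_1-L)^{-\alpha}$.

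The main obstacle I anticipate is the rigorous closing of the bootstrap in step~(i): one must rule out a jump of $t\mapsto\|F_t\|_{\alpha,L}$ across the threshold, either by an increment estimate $\|F_t-F_{t'}\|_{\alpha,L}$ drawn from the integral equation together with Lemma~\ref{lemGevCompos}, or by running a genuine Picard iteration in the complete space $\Id+G^{\alpha,L}(\R^N,\R^N)$ whose contraction factor is again controlled by the composition estimate. This is where the quantitative improvement over \cite{hms,mcwdGev} resides; once step~(i) is in hand, (ii) reduces to the short interpolation above.
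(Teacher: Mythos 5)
Your proposal follows essentially the same route as the paper: part (ii) is verbatim the paper's argument (interpolation via $L'=(L+L_1)/2$ and the Cauchy--Gevrey inequality~\eqref{ineqGevCauch} with $p=1$ applied to the components of $X_u$), and for part (i) the paper carries out precisely the second option you mention, namely a genuine fixed-point argument for $\cF(\psi)(t)=\int_0^t X\circ(\Id+\psi(\tau))\,\dd\tau$ on the closed ball of radius $\norm{X}_{\al,L_1}$ in $C^0\big([0,1],G^{\al,L}(\R^N,\R^N)\big)$, with ball-invariance coming from Lemma~\ref{lemGevCompos} exactly as you describe. The one detail to supply is that the contraction factor is \emph{not} controlled by the composition estimate alone: the paper takes $\epsf=\epsc(N,\al,L,L')$ for an intermediate $L'\in(L,L_1)$ so that the Cauchy--Gevrey inequality bounds $K=\max_{i,j}\norm{\pa_{x_j}X\cc{i}}_{\al,L'}$, and since $\cF$ itself need not be a contraction on $[0,1]$, one shows instead that $\norm{\cF^p(\psi^*)-\cF^p(\psi)}\le \tfrac{K^p}{p!}\norm{\psi^*-\psi}$, so that some iterate $\cF^p$ contracts.
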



\medskip

Building upon the previous result, we get

\begin{lemma}   \label{lemComposFlow}
Suppose $\al\ge1$ and $0<L<L_1$.
Then there exist $C=C(n,\al,L,L_1)$ and $\eps=\eps(n,\al,L,L_1)$ such that,
if $r\ge1$, $u_1,\ldots,u_r\in G^{\al,L_1}(\R^{2n})$, 
$\Psi\in \ID + G^{\al,L}(\R^{2n},\R^{2n})$ and
\beglab{ineqHypPsiu}
\norm{\Psi-\Id}_{\al,L} + C 
\big( 
\norm{u_1}_{\al,L_1}+\cdots+\norm{u_{r}}_{\al,L_1} 
\big)
\le\eps,
\edla
then
\beglab{eq.phibound}
\norm{\Phi^{u_r}\circ \cdots \circ \Phi^{u_1} \circ \Psi - \Psi}_{\al,L} \le 
C \big(\norm{u_1}_{\al,L_1}+\cdots+\norm{u_r}_{\al,L_1}\big)
\edla 
%
(with the same notation as in Lemma~\ref{lemGevHamFlow}(ii) for
the~$\Phi^{u_i}$'s).
\end{lemma}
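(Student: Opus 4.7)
The plan is to introduce an auxiliary scale $L'$ with $L<L'<L_1$ and telescope the composition $\Phi^{u_r}\circ \cdots \circ \Phi^{u_1}\circ\Psi$ into single-step differences that can each be estimated by combining Lemma~\ref{lemGevHamFlow}(ii) (at scales $L',L_1$) with Lemma~\ref{lemGevCompos} (at scales $L,L'$).

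More precisely, I would first use Lemma~\ref{lemGevHamFlow}(ii) applied with the pair $(L',L_1)$: there exists $\epsH'>0$ such that if $\norm{u_i}_{\al,L_1}\le \epsH'$ then $\Phi^{u_i}\in \Id+G^{\al,L'}(\R^{2n},\R^{2n})$ and
\[
\norm{\Phi^{u_i}-\Id}_{\al,L'}\le K_0 \norm{u_i}_{\al,L_1},\qquad K_0\defeq 2^\al(L_1-L')^{-\al}.
\]
Next, by Lemma~\ref{lemGevCompos} applied with the pair $(L,L')$, there is a threshold $\epsc>0$ such that, for any $f\in G^{\al,L'}(\R^{2n})$ and any $F\in G^{\al,L}(\R^{2n},\R^{2n})$ whose components satisfy $\cN^*_{\al,L}(F\cc j)\le \epsc$, one has $f\circ(\Id+F)\in G^{\al,L}(\R^{2n})$ with $\norm{f\circ(\Id+F)}_{\al,L}\le\norm{f}_{\al,L'}$. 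Applying this componentwise to $f=(\Phi^{u_i}-\Id)\cc j$ and $F=\Psi_{i-1}-\Id$ gives the single-step estimate
\[
\norm{\Phi^{u_i}\circ\Psi_{i-1}-\Psi_{i-1}}_{\al,L}\le \norm{\Phi^{u_i}-\Id}_{\al,L'}\le K_0 \norm{u_i}_{\al,L_1},
\]
where $\Psi_0\defeq\Psi$ and $\Psi_i\defeq\Phi^{u_i}\circ\Psi_{i-1}$.

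The main point (and the only slightly delicate one) is to verify that the smallness hypothesis $\cN^*_{\al,L}((\Psi_{i-1}-\Id)\cc j)\le \epsc$ persists through the induction. Since $\cN^*_{\al,L}(\cdot)\le \norm{\cdot}_{\al,L}$, a telescoping argument gives
\[
\norm{\Psi_{i-1}-\Id}_{\al,L}\le \norm{\Psi-\Id}_{\al,L}+K_0\sum_{j<i}\norm{u_j}_{\al,L_1},
\]
so it suffices to choose $C\defeq \max(K_0,1)$ and $\eps\defeq \min(\epsc,\epsH')$: the assumed bound $\norm{\Psi-\Id}_{\al,L}+C\sum_j\norm{u_j}_{\al,L_1}\le\eps$ then guarantees both that each $\norm{u_j}_{\al,L_1}\le\epsH'$ and that the running norm $\norm{\Psi_{i-1}-\Id}_{\al,L}$ stays below $\epsc$ at every stage.

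Once the induction is closed, summing the single-step estimates gives
\[
\norm{\Phi^{u_r}\circ\cdots\circ\Phi^{u_1}\circ\Psi-\Psi}_{\al,L}\le \sum_{i=1}^{r}\norm{\Psi_i-\Psi_{i-1}}_{\al,L}\le K_0\sum_{i=1}^{r}\norm{u_i}_{\al,L_1},
\]
which yields \eqref{eq.phibound}. The only real obstacle is the bookkeeping in the inductive smallness check; everything else reduces mechanically to two applications of the preceding lemmas, with the intermediate scale $L'$ introduced precisely to make the composition estimate of Lemma~\ref{lemGevCompos} and the flow estimate of Lemma~\ref{lemGevHamFlow}(ii) compatible.
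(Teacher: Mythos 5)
Your proposal is correct and follows essentially the same route as the paper's proof: the same intermediate scale $L'\in(L,L_1)$, Lemma~\ref{lemGevHamFlow}(ii) at the pair $(L',L_1)$ for $\norm{\Phi^{u_i}-\Id}_{\al,L'}$, Lemma~\ref{lemGevCompos} at the pair $(L,L')$ for the composition with the running map, and an induction whose only content is checking that the accumulated norm stays below $\epsc$. The paper phrases the telescoping as an induction on $r$ peeling off the last factor $\Phi^{u_r}$, but that is the same bookkeeping you carry out with your $\Psi_i$'s.
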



\subsubsection*{Proof of Lemma~\ref{lemGevHamFlow}}


(i) Let us pick $L' \in (L,L_1)$. We will prove the statement with
$\epsf \defeq \epsc(N,\al,L,L')$
(notation from Lemma~\ref{lemGevCompos}).

Let~$X$ be as in the statement. We write the restriction of its flow
to the time-interval $[0,1]$ in the form
$\Phi(t) = \ID + \xi(t)$,
with $t \in [0,1] \mapsto \xi(t)\in C^\infty(\R^N,\R^N)$ characterised by
\[
\xi(t) = \int_0^t X\circ\big(\ID + \xi(\tau)\big)\,\dd\tau
\quad\text{for all $t\in[0,1]$}.
\]
We will show that~$\xi$ belongs to
$\cB \defeq \{\, \psi \in C^0\big( [0,1], G^{\al,L}(\R^N,\R^N) \big)
\mid \norm{\psi} \le \norm{X}_{\al,L_1} \,\}$, 
which is a closed ball in a Banach space.

Lemma~\ref{lemGevCompos} shows that the formula
$\cF(\psi)(t) \defeq \int_0^t X\circ\big(\ID +
\psi(\tau)\big)\,\dd\tau$
defines a map from~$\cB$ to~$\cB$.
Moreover, if $\psi,\psi^*\in\cB$ satisfy 
\[
\norm{\psi^*(t)-\psi(t)}_{\al,L} \le A(t)
\ens\text{for all $t\in[0,1]$},
\]
where $t\in[0,1] \mapsto A(t)$ is continuous, then 
for each~$t$ and~$i$, 
\begin{multline*}
\cF(\psi^*)(t)\cc i-\cF(\psi)(t)\cc i = 
\int_0^t \dd \tau \sum_{j=1}^N \int_0^1 \dd\th \\
\pa_{x_j}X\cc i\circ\big( \ID + (1-\th)\psi(\tau) + \th\psi^*(\tau) \big)
\big( \psi^*(\tau)\cc j-\psi(\tau)\cc j \big),
\end{multline*}
whence
\[
\norm{ \cF(\psi^*)(t)-\cF(\psi)(t) }_{\al,L} \le K \int_0^t
A(\tau)\,\dd\tau
\ens\text{with $K \defeq \max_{i,j} \norm{\pa_{x_j}X\cc i}_{\al,L'}$} 
\]
(we have $K < \infty$ by~\eqref{ineqGevCauch} and we have used
Lemma~\ref{lemGevCompos} and~\eqref{ineqGevBanAlg}).
Iterating this, we get 
\[
\norm{\cF^p(\psi^*) - \cF^p(\psi)} \le \frac{K^p}{p!} \norm{\psi^*-\psi}
\ens\text{for all $p\in\N$},
\]
which shows that~$\cF^p$ is a contraction for~$p$ large enough. The
map~$\cF$ thus has a unique fixed point in~$\cB$, and this fixed point
is~$\xi$.

\medskip

\noindent
(ii) Let $L' \defeq (L+L_1)/2$.
For any $u\in G^{\al,L_1}(\R^{2n})$, inequality~\eqref{ineqGevCauch}
with $p=1$ reads
\[
\sum_{m\in \N^{2n};\ |m|=1} \norm{\pa^m u}_{\al,L'} \le 
(L_1-L')^{-\al} \norm{u}_{\al,L_1}.
\]
The \lhs is precisely the $(\al,L')$-Gevrey norm of the Hamiltonian
vector field generated by~$u$.
Therefore, point~(i) shows that the conclusion holds with $\epsH = (L_1-L')^\al \epsf(2n,\al,L,L')$.


\subsubsection*{Proof of Lemma~\ref{lemComposFlow}}


Let us pick $L' \in(L,L_1)$. We will show the statement with
\[ C\defeq 2^\al (L_1-L')^{-\al}, \quad
\eps \defeq \min\big\{
\epsc(2n,\al,L,L'), C \epsH(n,\al,L',L_1) \big\} \]
by induction on~$r$.

The induction is tautologically initialized for $r=0$. Let us take
$r\ge1$ and assume that the statement holds at rank $r-1$.
Given $u_1,\ldots,u_r\in G^{\al,L_1}(\R^{2n})$ and
$\Psi\in \ID + G^{\al,L}(\R^{2n},\R^{2n})$ 
satisfying~\eqref{ineqHypPsiu}, we set
$\chi \defeq \Phi^{u_{r-1}}\circ \cdots \circ \Phi^{u_1} \circ \Psi$,
which satisfies 
\[
\norm{\chi-\Psi}_{\al,L} \le 
C \big(\norm{u_1}_{\al,L_1}+\cdots+\norm{u_{r-1}}_{\al,L_1}\big)
\]
by the induction hypothesis,
and observe that we also have
\[ \norm{\Phi^{u_r}-\ID}_{\al,L'} \le C \norm{u_r}_{\al,L_1} \]
since $\norm{u_r}_{\al,L_1}\le\epsH(n,\al,L',L_1)$.
Now
\begin{align*}
\norm{\Phi^{u_r}\circ \cdots \circ \Phi^{u_1} \circ \Psi - \Psi}_{\al,L} &\le 
\norm{ (\Phi^{u_r}-\ID)\circ\chi }_{\al,L} + \norm{\chi-\Psi}_{\al,L}
\\[1ex] 
&\le \norm{ \Phi^{u_r}-\ID }_{\al,L'} + \norm{\chi-\Psi}_{\al,L}
\end{align*}
since $\norm{\chi-\Id}_{\al,L} \le 
\norm{\Psi-\Id}_{\al,L} + \norm{\chi-\Psi}_{\al,L}
\le \norm{\Psi-\Id}_{\al,L} + 
C \big(\norm{u_1}_{\al,L_1}+\cdots+\norm{u_{r-1}}_{\al,L_1}\big)
\le \epsc(2n,\al,L,L')$
and we are done.


\newpage


\begin{thebibliography}{AFLXZ}


\bibitem[AK70]{AK}   D.~V.~Anosov and A.~B.~Katok,
\newblock {\it New examples in smooth ergodic theory. Ergodic diffeomorphisms},
\newblock { Transactions of the Moscow Mathematical Society}
\textbf{23} (1970), 1--35.


 \bibitem[AFLXZ]{AFLXZ}  A. Avila, B. Fayad,  P. Le Calvez, Z. Zhan, D. Xu,
{\it Zero entropy area preserving analytic diffeomorphisms of the disc are never mixing},  arXiv:1509.06906


 \bibitem[BF18]{BF}  A. Bounemoura and J. F\'ejoz,
{\it KAM, $\al$-Gevrey regularity and the $\al$-Bruno-R\"ussmann condition},
to appear in Annali della Scuola Normale Superiore di Pisa,
\url{https://hal.archives-ouvertes.fr/hal-01524853}


\bibitem[D88]{douady}  R. Douady, 
%
{\it Stabilit\'e ou instabilit\'e des points fixes elliptiques}, 
%
{ Ann. Sci. Ec. Norm. Sup.} \textbf{21} (1988), no. 1, 1--46.


\bibitem[DLC83]{DLC} R. Douady, P. Le Calvez, 
%
{\it Exemple de point fixe elliptique non
topologiquement stable en dimension 4},
%
{ C. R. Acad. Sci. Paris} \textbf{296} (1983), 895--
898.


 \bibitem[FK09]{FKherman}  B. Fayad, R. Krikorian, 
\newblock {\it Herman's last geometric theorem},
\newblock {   Ann. Sci. Ecole Norm. Sup. } 
\textbf{42} (2009), 193--219.


\bibitem[KS12]{KS} V. Kaloshin and M. Saprykina, 
%
{\it An example of a nearly integrable Hamiltonian
system with a trajectory dense in a set of maximal Hausdorff
dimension}, 
%
{ Comm. Math. Phys.} \textbf{315} (2012), no. 3, 643--697.

\bibitem[KG14]{KG} V. Kaloshin and M. Guardia, 
%
{\it Orbits of nearly integrable systems accumulating to KAM tori},  arXiv:1412.7088 (2014).


\bibitem[LMS18]{mcwdGev}
%
L.~Lazzarini, J.-P.~Marco, D.~Sauzin,
%
{\it Measure and capacity of wandering domains
in  Gevrey near-integrable symplectic systems}, 
%
{ Memoirs of the Amer. Math. Soc.} (in press), 106
pp.
%
\\ \url{http://hal.archives-ouvertes.fr/hal-01172729}


\bibitem[MS03]{hms} J.-P.~Marco, D. Sauzin,
{\it Stability and instability for Gevrey quasi-convex near-integrable Hamiltonian
systems}, 
{ Publ. Math. I.H.E.S.} {\bf 96} (2003), 199--275. 


\bibitem[MS04]{MSetds} J.-P.~Marco, D. Sauzin,
{\it Wandering domains and random walks in Gevrey near-integrable Hamiltonian systems}, 
{ Ergodic Theory \& Dynam. Systems} {\bf  24} (2004), no. 5, 1619--1666.


\bibitem[Mo73]{moser} J. Moser,
{\it Stable and random motions in dynamical systems. With special
emphasis on celestial mechanics}, 
{Annals of Mathematics Studies 77},
Princeton University Press, Princeton, N. J.
(1973),
viii+198 pp.


\bibitem[Po04]{Popov04} G. Popov,
{\it K{AM} theorem for {G}evrey {H}amiltonians},
{Ergodic Theory \& Dynamical Systems} {\bf 24} (2004), no. 5, 1753--1786.


\bibitem[MP10]{MitevPopov} T. Mitev and G. Popov,
{\it Gevrey normal form and effective stability of {L}agrangian
              tori},
{Discrete and Continuous Dynamical Systems. Series S},
{\bf 3} (2010), no. 4, 643--666.


\end{thebibliography}
\end{document}